\documentclass[a4paper,11pt]{amsart}
\usepackage{mathrsfs}
\usepackage{cases}
\usepackage{epic}
\usepackage{amsfonts}
\usepackage{graphicx}
\usepackage{amsmath}
\usepackage{amssymb, upgreek}
\usepackage{bm}
\usepackage{latexsym,todonotes}
\usepackage{pdflscape}
\usepackage[all]{xypic}
\usepackage{ytableau}
\usepackage[enableskew,vcentermath]{youngtab}
\usepackage[all]{xy}
\usepackage{color}
\usepackage{colordvi}
\usepackage{multicol}
\oddsidemargin 0cm \evensidemargin -0cm \textwidth 16cm

\usepackage[linktocpage=true]{hyperref}
\usepackage{dsfont}
\hypersetup{colorlinks,linkcolor=blue,urlcolor=cyan,citecolor=blue}
\newcommand{\bvs}{\mathbf{\varsigma}}
\newcommand{\vs}{\varsigma}

\allowdisplaybreaks

\begin{document}
\input xy
\xyoption{all}

\newtheorem{innercustomthm}{{\bf Main~Theorem}}
\newenvironment{customthm}[1]
  {\renewcommand\theinnercustomthm{#1}\innercustomthm}
  {\endinnercustomthm}

  \def\haTh{\widehat{\Theta}}
\def \haH{\widehat{H}}
\newcommand{\bB}{{\mathbf B }}
\newcommand{\bDel}{\boldsymbol{\Delta}}
\newcommand{\bBKH}{\acute{\mathbf H}}
\newcommand{\bH}{\mathbf H}
\newcommand{\BKH}{\acute{H}}
\def \tM{\mathcal{M}\widetilde{\ch}}
\def \E{K}
\def\tTT{\mathrm T}
\def \hath{\widehat{\theta}}
\def \bt{\mathbf t}
\def \bn{\mathbf n}
\def \bh{\mathbf h}
\newcommand{\cc}{{\mathcal C}}
\def \bC{\mathbf C}
\def \dB{\Theta}
\def \bpi{\boldsymbol{\pi}}
\def \dt{{\dot \theta}}
\def \dT{{\dot \Theta}}
\def \co{\mathcal O}
\def \bJ{{\mathbf J}}
\def \ch{{\mathcal H}}
\def \cm{{\mathcal M}}
\def \ct{{\mathcal T}}
\def \bS{\mathbf S}
\def \Uto{\U_{\mathrm{tor}}}
\def \tMH{{\cm\widetilde{\ch}(\Lambda^\imath)}}
\def \tMHl{{\cm\widetilde{\ch}(\bs_\ell\Lambda^\imath)}}
\def \haB{\widehat{B}}
\newcommand{\tMHX}{{}^\imath\widetilde{\ch}(\X_\bfk)}
\newcommand{\tCMH}{{}^\imath\widetilde{\cc}(\bfk Q)}
\newcommand{\tCMHX}{{}^\imath\widetilde{\cc}(\X_\bfk)}
\renewcommand{\mod}{\operatorname{mod}\nolimits}
\newcommand{\tCMHC}{{}^\imath\widetilde{\cc}(\bfk C_n)}
\newcommand{\tCHX}{\widetilde{\cc}(\X_\bfk)}

\numberwithin{equation}{section}

\renewcommand{\ker}{\operatorname{Ker}\nolimits}

\def \tH{{\widetilde{H}}}
\def \tTH{\widetilde{\Theta}}
\def \cp{\mathcal P}
\newcommand{\Hom}{\operatorname{Hom}\nolimits}
\newcommand{\RHom}{\operatorname{RHom}\nolimits}

\newcommand{\Aut}{\operatorname{Aut}\nolimits}
\newcommand{\Id}{\operatorname{Id}\nolimits}
\newcommand{\dimv}{\operatorname{\underline{dim}}\nolimits}
\newcommand{\Sym}{\operatorname{Sym}\nolimits}
\newcommand{\Ext}{\operatorname{Ext}\nolimits}
\newcommand{\Fac}{\operatorname{Fac}\nolimits}
\newcommand{\add}{\operatorname{add}\nolimits}

\def \y{{B}}
\def \haB{\widehat{B}}

\def \bTH { \boldsymbol{\Theta}}
\def \bDel{ \boldsymbol{\Delta}}

\newcommand{\mbf}{\mathbf}
\newcommand{\mbb}{\mathbb}
\newcommand{\mrm}{\mathrm}
\newcommand{\A}{\mathcal A}
\newcommand{\cbinom}[2]{\left\{ \begin{matrix} #1\\#2 \end{matrix} \right\}}
\newcommand{\dvev}[1]{{B_1|}_{\ev}^{{(#1)}}}
\newcommand{\dv}[1]{{B_1|}_{\odd}^{{(#1)}}}
\newcommand{\dvd}[1]{t_{\odd}^{{(#1)}}}
\newcommand{\dvp}[1]{t_{\ev}^{{(#1)}}}
\newcommand{\ev}{\bar{0}}
\newcommand{\End}{\mrm{End}}
\newcommand{\rank}{\mrm{rank}}
\newcommand{\de}{\delta}
\def \C{{\mathbb C}}
\newcommand{\kk}{\widetilde{\mathbf{K}}}
\newcommand{\la}{\lambda}
\newcommand{\LR}[2]{\left\llbracket \begin{matrix} #1\\#2 \end{matrix} \right\rrbracket}
\newcommand{\N}{\mathbb N}
\newcommand{\bbZ}{\mathbb Z}
\newcommand{\odd}{\bar{1}}
\newcommand{\one}{\mathbf 1}
\newcommand{\ov}{\overline}
\newcommand{\qbinom}[2]{\begin{bmatrix} #1\\#2 \end{bmatrix} }
\newcommand{\Q}{\mathbb Q}
\newcommand{\sll}{\mathfrak{sl}}
\newcommand{\T}{\texttt{\rm T}}
\newcommand{\ttt}{\mathfrak{t}}
\newcommand{\U}{\mbf U}
\newcommand{\K}{\mathbb K}
\newcommand{\F}{\mathbb F}
\newcommand{\bi}{\imath}
\newcommand{\bs}{\mathbf s}
\newcommand{\arxiv}[1]{\href{http://arxiv.org/abs/#1}{\tt arXiv:\nolinkurl{#1}}}
\newcommand{\Udot}{\dot{\mbf U}}
\newcommand{\UA}{{}_\A{\mbf U}}
\newcommand{\UAdot}{{}_\A{\dot{\mbf U}}}
\newcommand{\Ui}{{\mbf U}^\imath}
\newcommand{\Uj}{{\mbf U}^\jmath}
\newcommand{\vev}{v^+_{2\la} }
\newcommand{\vodd}{v^+_{2\la+1} }
\newcommand{\Y}{\bB}
\newcommand{\Z}{\mathbb Z}
\newcommand{\Pa}{\operatorname{Pa}\nolimits}
\newcommand{\TT}{\mathbf T}
\newcommand{\B}{\mbf V}
\newcommand{\D}{\mbf D}
\newcommand{\BA}{{}_\A{\B}}
\newcommand{\DA}{{}_\A{\B}'}
\def \X{\mathbb X}
\def \fg{\mathfrak{g}}
\def \bU{{\mathbf U}}
\newcommand{\tK}{\widetilde{K}}
\def \I{\mathbb{I}}
\def \bv{v}
\def \cv{\mathcal V}
\def \cu{\mathcal U}
\def \LaC{\Lambda_{\texttt{Can}}}
\newcommand{\tUiD}{{}^{\text{Dr}}\tUi}
\def \nua{a}
\def \hn{\widehat{\mathfrak{n}}}

\newcommand{\UU}{{\mathbf U}\otimes {\mathbf U}}
\newcommand{\UUi}{(\UU)^\imath}
\newcommand{\tUU}{{\tU}\otimes {\tU}}
\newcommand{\tUUi}{(\tUU)^\imath}
\newcommand{\tUi}{\widetilde{{\mathbf U}}^\imath}
\newcommand{\sqq}{{\bf v}}
\newcommand{\sqvs}{\sqrt{\vs}}
\newcommand{\dbl}{\operatorname{dbl}\nolimits}
\newcommand{\swa}{\operatorname{swap}\nolimits}
\newcommand{\Gp}{\operatorname{Gp}\nolimits}
\newcommand{\coker}{\operatorname{Coker}\nolimits}

\newcommand{\tU}{\widetilde{\mathbf U}}

\def \btau{{{\tau}}}
\newcommand{\tk}{\Bbbk}

\def \ff{B}
\def \cJ{\mathcal{J}}

\def \fn{\mathfrak{n}}
\def \fh{\mathfrak{h}}
\def \fu{\mathfrak{u}}
\def \fv{\mathfrak{v}}
\def \fa{\mathfrak{a}}
\def \fk{\mathfrak{k}}

\def \cl{L}

\def \tf{\widetilde{f}}

\def \K{\mathbb{K}}
\def \R{\mathbb{R}}
\def \SS{\mathbb{S}}

\def \BF{\digamma}
\def \BG{\mathbb G}
\def \tMHL{{}^{\imath}\widetilde{\ch}(\LaC^{\imath,op})}
\def \cc{\mathcal C}
\def\ca{\mathcal A}
\newcommand{\Iso}{\operatorname{Iso}\nolimits}
\renewcommand{\Im}{\operatorname{Im}\nolimits}
\newcommand{\res}{\operatorname{res}\nolimits}
\newcommand{\iH}{{}^\imath\widetilde{\ch}}
\newcommand{\Mod}{\operatorname{Mod}\nolimits}
\newcommand{\coh}{\operatorname{coh}\nolimits}
\newcommand{\rep}{\operatorname{rep}\nolimits}
\newcommand{\Ker}{\operatorname{Ker}\nolimits}
\newcommand{\tUiDgr}{{\text{gr}}\tUiD}

\def \G{\mathbb G}
\def \PL{\mathbb{P}^1_{\bfk}}
\def \scrM{\mathscr M}
\def \scrf{\mathscr F}
\def \scrt{\mathscr T}
\def \P{\mathbb P}
\def \cI{\mathcal I}
\def \cJ{\mathcal J}
\def \cs{{\mathcal{S}}}
\def \ch{\mathcal H}
\def \cd{\mathcal D}
\def\bfk{\mathbf{k}}
\def \ck{\mathcal K}
\def \bp{\mathbf p}
\def \ul{\underline}
\def \fp{\mathfrak p}
\def \QJ{Q_{\texttt{J}}}
\def \II{\I_0}
\def \Lg{L\fg}

\def \bvs{{\boldsymbol{\varsigma}}}
\newcommand{\ci}{{\I}_{\btau}}
\def \cn{\mathcal N}

\def\tor{{\rm tor}}
\newcommand{\calc}{{\mathcal C}}
\newcommand{\haT}{\widehat{\Theta}}

\def \La{\Lambda}
\def \iLa{\Lambda^\imath}
\def \BH{\mathbb H}
\def \bH{\mathbf{H}}
\newcommand{\gr}{\operatorname{gr}\nolimits}
\newcommand{\wt}{\text{wt}}
\def \cR{\mathcal R}
\def \ce{\mathcal E}
\def \cb{\mathcal B}
\def \bla{\boldsymbol{\lambda}}
\def \blx{x}

\newtheorem{theorem}{Theorem}[section]
\newtheorem{acknowledgement}[theorem]{Acknowledgement}
\newtheorem{lemma}[theorem]{Lemma}
\newtheorem{proposition}[theorem]{Proposition}
\newtheorem{corollary}[theorem]{Corollary}
\newtheorem{remark}[theorem]{Remark}
\newtheorem{definition}[theorem]{Definition}
\newtheorem*{thm}{Theorem}
\numberwithin{equation}{section}

\title[New realization of $\imath$quantum groups via $\Delta$-Hall algebras]{New realization of $\imath$quantum groups via $\Delta$-Hall algebras}

\author[Jiayi Chen]{Jiayi Chen}
\address{ School of Mathematical Sciences, Xiamen University, Xiamen 361005, P.R. China}
\email{jiayichen.xmu@foxmail.com}

\author[Yanan Lin]{Yanan Lin}
\address{ School of Mathematical Sciences, Xiamen University, Xiamen 361005, P.R. China}
\email{ynlin@xmu.edu.cn}

\author[Shiquan Ruan]{Shiquan Ruan$^*$}
\address{ School of Mathematical Sciences, Xiamen University, Xiamen 361005, P.R. China}
\email{sqruan@xmu.edu.cn}

\thanks{$^*$ the corresponding author}
\subjclass[2010]{16E60, 17B37, 18E30}
\keywords{$\Delta$-Hall number; $\Delta$-Hall algebra; $\imath$quantum group; $\imath$Hall algebra; derived Hall algebra}

\begin{abstract} For an essentially small hereditary abelian category $\mathcal{A}$, we define a new kind of algebra $\mathcal{H}_{\Delta}(\mathcal{A})$, called the $\Delta$-Hall algebra of $\mathcal{A}$. The basis of $\mathcal{H}_{\Delta}(\mathcal{A})$ is the isomorphism classes of objects in $\mathcal{A}$, and the $\Delta$-Hall numbers calculate certain three-cycles of exact sequences in $\mathcal{A}$. We show that the $\Delta$-Hall algebra $\mathcal{H}_{\Delta}(\mathcal{A})$ is isomorphic to the 1-periodic derived Hall algebra of $\mathcal{A}$. By taking suitable extension and twisting, we can obtain the $\imath$Hall algebra and the semi-derived Hall algebra associated to $\mathcal{A}$ respectively.
	
	When applied to the the nilpotent representation category $\mathcal{A}={\rm rep^{nil}}(\mathbf{k} Q)$ for an arbitrary quiver $Q$ without loops, the (\emph{resp.} extended) $\Delta$-Hall algebra provides a new realization of the (\emph{resp.} universal) $\imath$quantum group associated to $Q$.
	
\end{abstract}

\maketitle

 \setcounter{tocdepth}{2}

\section{Introduction}

The Hall algebra approach to quantum groups is a hot topic in representation theory of algebras.
In 1990, Ringel \cite{R1} constructed a Hall algebra associated to a quiver $Q$ over a finite field, and identified its generic version with the positive part of quantum groups when $Q$ is Dynkin type.
Later, Green \cite{G} generalized it to Kac-Moody setting.

Since then, people made efforts to the realization of the whole quantum groups via Hall algebras. Xiao \cite{X97} constructed the Drinfeld double of Hall algebras. Bridgeland \cite{Br13} considered the Hall algebra of 2-periodic complexes of projective modules for a hereditary algebra.
Lu-Peng \cite{LP16} generalized Bridgeland's construction to the
hereditary abelian categories (perhaps without projective objects).
The Hall type Lie algebras of root categories can also be used to realize Kac-Moody algebras, see Peng-Xiao \cite{PX00} and Lin-Peng \cite{LP3}.

Ringel's version of Hall algebra has found further generalizations and improvements
which allow more flexibilities.
To\"{e}n \cite{T06} and Xiao-Xu \cite{XX08} introduced the derived Hall algebras for
triangulated categories satisfying certain homological finiteness conditions.
Xu-Chen \cite{XC13} constructed derived Hall algebras for odd periodic triangulated categories.
Gorsky \cite{Gor18} investigated semi-derived Hall algebras for Frobenius categories.
Lu defined semi-derived Hall algebras for the so-called weakly 1-Gorenstein categories in \cite[Appendix A]{LW19a}.


The $\imath$quantum group is a generalization of quantum groups, which arises from the construction of quantum symmetric pairs by Letzter \cite{Let99}; see \cite{Ko14} for an extension to Kac-Moody type.
A striking breakthrough of the $\imath$quantum groups is the discovery of canonical basis by Bao-Wang \cite{BW18a}. As outlined by Bao-Wang, most of the fundamental constructions of quantum groups should admit generalizations in the setting of $\imath$quantum groups, see \cite{BK19, BW18a, BW18b} for generalizations of (quasi) R-matrix and canonical bases, and also see \cite{BKLW18, Li19, LW21b} for geometric realizations and \cite{BSWW18} for KLR type categorification.

In recent years, Lu-Wang \cite{LW19a, LW20a} have developed $\imath$Hall algebras of $\imath$quivers to realize the universal quasi-split $\imath$quantum groups of Kac-Moody type.
Let's briefly recall the construction of $\imath$Hall algebras. For a hereditary abelian category $\ca$ over a finite field $\F_q$, we have the Ringel-Hall algebra $\mathcal{H}(\mathcal{C}_1(\ca))$ of the category $\mathcal{C}_1(\ca)$ of 1-periodic complexes over $\ca$, and the $\imath$Hall algebra $\iH(\mathcal{A})$ is then obtained from $\mathcal{H}(\mathcal{C}_1(\ca))$ by successively applying quotient, localization and twisting. The construction is difficult, but it turns out that the  $\imath$Hall algebra accesses a nice basis and a simplified multiplication formula, c.f. \cite{LW20a}.

Recall that the $\imath$quantum group is a quotient of the universal $\imath$quantum group by a certain ideal generated by central elements.
It has been proved in \cite{CLR} that the 1-periodic derived Hall algebra of $\ca$ is a quotient algebra of the $\imath$Hall algebra $\iH(\mathcal{A})$, which produces a realization for the split $\imath$quantum group.


%
%
%

The goal of this paper is to construct new algebras in a more direct way to realize (universal) $\imath$quantum groups. Namely,
we define the \emph{$\Delta$-Hall algebra} $\mathcal{H}_{\Delta}(\mathcal{A})$ associated to $\ca$. Following Ringel's original construction, the basis of $\mathcal{H}_{\Delta}(\mathcal{A})$ is the isomorphism classes of objects in $\ca$, while the Hall number is replaced by the so-called $\Delta$-Hall number. Roughly speaking, the $\Delta$-Hall number $\widehat{F}^M_{AB}$ for $A,B,M\in\ca$
calculates the number of three-cycles of exact sequences as follow:
\[
\begin{tikzpicture}
	\node (1) at (0,0) {$N$};
	\node (2) at (1,0) {$M$};
	\node (3) at (2,0){$L$,};
	\node (6) at (0.5,0.9){$A$};
	\node (5) at (1,1.8){$I$};
	\node (4) at (1.5,.9){$B$};
	\draw[->] (2) --node[above ]{} (1);
	\draw[->] (3) --node[below ]{} (2);
	\draw[->] (4) --node[above]{} (3);
	\draw[->] (5) --node[below ]{} (4);
	\draw[->] (6) --node[below ]{} (5);
	\draw[->] (1) --node[below ]{} (6);
	\end{tikzpicture}
\]where each two arrows on the same line form a short exact sequence.

The associativity for the $\Delta$-Hall numbers relies on Green's formula in $\ca$.
The main results of this paper indicate that the $\Delta$-Hall algebra $\mathcal{H}_{\Delta}(\mathcal{A})$ is isomorphic to the 1-periodic derived Hall algebra $\cd\ch_1(\mathcal{A})$, while the extended $\Delta$-Hall algebra $\widetilde{\mathcal{H}}_{\Delta}(\mathcal{A})$ is isomorphic to the $\imath$Hall algebra $\iH(\mathcal{A})$. Moreover, by twisting on $\widetilde{\mathcal{H}}_{\Delta}(\mathcal{A})$ we recover the semi-derived Hall algebras.
When applied to the the nilpotent representation category $\mathcal{A}={\rm rep^{nil}}(\mathbf{k} Q)$ for an arbitrary quiver $Q$ without loops, we obtain new realizations of the $\imath$quantum group $\Ui_{|v= \sqq}$ and the universal $\imath$quantum group $\tUi_{|v= \sqq}$ associated to $Q$. We summarized the relations between these algebras in the following commutative diagram:

%
\[
\begin{tikzpicture}
\node (1) at (-5,0){$\tUi_{|v= \sqq}$};
\node (2) at (0,0){$\iH(\bfk Q)$};
\node (3) at (5,0) {$\widetilde{\mathcal{H}}_{\Delta}(\bfk Q)$};
\node (4) at (-5,-3){$\Ui_{|v= \sqq}$};
\node (5) at (0,-3){$\cd\ch_1(\bfk Q)$};
\node (6) at (5,-3) {$\mathcal{H}_{\Delta}(\bfk Q)$};
\draw[>->] (1) --node[above]{} (2);
\draw[->] (2) --node[above ]{$\cong$} (3);
\draw[>->] (4) --node[above]{} (5);
\draw[->] (5) --node[above ]{$\cong$} (6);
\draw[<<-] (4) --node[below right]{\cite[Prop 6.2]{LW19a}} (1);
\draw[<<-] (5) --node[below right]{\cite[Thm 3.4]{CLR}} (2);
\draw[<<-] (6) --node[left]{} (3);
\node (3) at (-2.5,-0.5) {\cite[Thm 9.6]{LW20a}};
\node (3) at (2.5,-0.4) {Prop \ref{prop4.8}};
\node (6) at (2.5,-3.4) {Prop \ref{prop3.2}};
\node (3) at (-2.7,-3.5) {\cite[Thm 4.4]{CLR}};
\node (3) at (6.1,-1.8) {Prop \ref{cor4.6}\quad };
\end{tikzpicture}
\]

The paper is organized as follows. In Section 2, we prove the associativity for the $\Delta$-Hall numbers and define the $\Delta$-Hall algebras.
We show that $\Delta$-Hall algebras are isomorphic to derived Hall algebras in Section 3, which provide a new realization for $\imath$quantum groups.
Section 4 is devoted to formulate the isomorphism between extended $\Delta$-Hall algebras and $\imath$Hall algebras, hence provide a new way to realize the universal $\imath$quantum groups.
In Section 5, by considering the twisting on $\Delta$-Hall algebras we recover the semi-derived Hall algebras.

\medskip
For the convenience of readers, we list different algebras (with multiplication and basis), associated to an essentially small hereditary abelian category $\ca$, involved in this paper as follows:
\\
$\triangleright$ $(\mathcal{H}(\mathcal{A}),\;\diamond)$ $-$ Ringel-Hall algebra of $\ca$ with the basis $\big\{[M ]\mid [M]\in{\rm Iso}(\mathcal{A})\big\}$;
\\
$\triangleright$ $(\mathcal{H}_{\Delta}(\mathcal{A}),\;*)$ $-$ $\Delta$-Hall algebra of $\ca$ with the basis $\big\{[M ]\mid [M]\in{\rm Iso}(\mathcal{A})\big\}$;
\\
$\triangleright$ $(\mathcal{DH}_1(\mathcal{A}),\;*)$ $-$ 1-periodic derived Hall algebra of $\mathcal{A}$ with the basis $\big\{u_{[M]}\mid [M]\in{\rm Iso}(\mathcal{D}_1(\ca))\big\}$;
\\
$\triangleright$ $(\cs\cd\ch(\cc_1(\ca)),\;\diamond)$ $-$ semi-derived Hall algebra of $\cc_1(\ca)$ with the basis $\big\{[M ]\diamond[K_\alpha]\mid [M]\in{\rm Iso}(\mathcal{A}),\alpha\in K_0(\ca)\big\}$;
\\
$\triangleright$ $(\iH(\ca),\;*)$ $-$ $\imath$Hall algebra of $\cc_1(\ca)$ with the basis $\big\{[M ]*[K_\alpha]\mid [M]\in{\rm Iso}(\mathcal{A}),\alpha\in K_0(\ca)\big\}$;
\\
$\triangleright$ $(\widetilde{\mathcal{H}}_{\Delta}(\mathcal{A}),\;*)$ $-$ extended $\Delta$-Hall algebra of $\ca$ with the basis $\big\{[M ][K_\alpha]\mid [M]\in{\rm Iso}(\mathcal{A}),\alpha\in K_0(\ca)\big\}$;
\\
$\triangleright$ $(\widetilde{\widetilde{\mathcal{H}}}_{\Delta}(\mathcal{A}),\;*)$ $-$ another extended version of $\mathcal{H}_{\Delta}(\mathcal{A})$ with the basis $\big\{[M ][K_\alpha]\mid [M]\in{\rm Iso}(\mathcal{A}),\alpha\in \dfrac{1}{2}K_0(\ca)\big\}$;
\\
$\triangleright$ $({}_{\varphi}\widetilde{\mathcal{H}}_{\Delta}(\mathcal{A}),\;\divideontimes)$ $-$ twisted extended $\Delta$-Hall algebra of $\ca$ with the basis $\big\{[M ][K_\alpha]\mid [M]\in{\rm Iso}(\mathcal{A}),\alpha\in K_0(\ca)\big\}$.

\section{Definition of $\Delta$-Hall algebras}

In this paper, we take the field $\mathbf{k}=\mathbb F_q$, a finite field of $q$ elements.
Let $\mathcal{A}$ be an essentially small hereditary abelian category, linear over $\mathbf{k}$. Assume $\ca$ has finite morphism and extension spaces, i.e.,
\[
|\Hom_\ca(M,N)|<\infty,\quad |\Ext^1_\ca(M,N)|<\infty,\,\,\forall M,N\in\ca.
\]

Inspired by the constructions of the Ringel-Hall algebras due to Ringel \cite{R1} and the $\imath$Hall algebras due to \cite{LW20a,LP16}, we will define a new kind of algebras associated to $\ca$ in this section, called $\Delta$-Hall algebras.

\subsection{Ringel-Hall algebras and Green's formula}
First we will recall the definition of Ringel-Hall algebra of $\ca$.
Let ${\rm Iso}(\mathcal{A})$ be the collection of isomorphic classes in $\mathcal{A}$.
For any $M\in\mathcal{A}$, we always use the notation $a_M$ to denote the cardinality of the automorphism group ${\rm Aut}(M)$ of $M$.

For any three objects $A,B,M\in\ca$,
define $\Ext_\ca^1(A,B)_M\subseteq \Ext_\ca^1(A,B)$ as the subset parameterizing extensions whose middle term is isomorphic to $M$.
The {\em Ringel-Hall algebra} (or {\em Hall algebra} for short) {$\mathcal{H}(\mathcal{A})$} of
$\mathcal{A}$ is defined to be the $\mathbb{Q}$-vector space with the basis
$\big\{[M ]\mid [M]\in{\rm Iso}(\mathcal{A})\big\}$,
endowed with the multiplication defined by (see \cite{Br13})
\[
{[A]\diamond[B]=\sum_{[M]\in{\rm Iso}(\mathcal{A})}\frac{|\Ext_\ca^1(A,B)_M|}{|\Hom_\ca(A,B)|}\cdot [M].}
\]
We remark that the Ringel-Hall algebra used here is the dual version of the original one defined in \cite{R1}.
Throughout the paper, when we sum for $[M]$, we take all the elements $[M]\in{\rm Iso}(\mathcal{A})$ unless stated otherwise.

For any three objects $A,B,M\in\ca$, the \emph{Hall number} $F^M_{AB}$ is given by
\begin{align}
 \label{eq:Fxyz}
F_{AB}^M:= \big |\{X\subseteq M \mid X \cong B,  M/X\cong A\} \big |.
\end{align}
The Riedtmann-Peng formula states that
 \[F^M_{AB}=\frac{|\Ext_\ca^1(A,B)_M|}{|\Hom_\ca(A,B)|} \cdot \frac{a_M}{a_Aa_B}.
\]

The associativity of Hall numbers tells that
\begin{align*}\sum\limits_{[X]}F_{AB}^XF_{XC}^M=\sum\limits_{[Y]}F_{AY}^MF_{BC}^Y,
\end{align*} which will be denoted by $F_{ABC}^M$ for short.

Since $\ca$ is hereditary, the following famous \emph{Green's formula} \cite[Theorem 2]{G} holds:
\begin{align}\label{Green's formula}
&\sum_{[E]} F_{MN}^{E}F_{XY}^{E}\frac{1}{a_E}
=\sum\limits_{[A],[B],[C],[D]}q^{-\langle A,D\rangle}F_{AB}^{M}F_{CD}^{N}F_{AC}^{X}
F_{BD}^{Y}\frac{a_Aa_Ba_Ca_D}{a_Ma_Na_Xa_Y},
\end{align}
where $\langle A,D\rangle={\rm dim\,Hom}_{\ca}(A,D)-{\rm dim\,Ext}^1_{\ca}(A,D)$ is the Euler form in $\ca$.

\subsection{$\Delta$-Hall algebras}
In order to give a realization of $\imath$quantum groups, Lu-Wang \cite{LW20a} introduce the notion of $\imath$Hall algebras. More precisely,
they firstly considered the Ringel-Hall algebra $\mathcal{H}(\mathcal{C}_1(\ca))$ of the category $\mathcal{C}_1(\ca)$ of 1-periodic complexes over $\ca$, and then used the techniques of quotient and localization successively to obtain the so-called semi-derived Hall algebra $\mathcal{SDH}(\mathcal{C}_1(\ca))$ of $\mathcal{C}_1(\ca)$, and finally by using certain twisting to obtain the $\imath$Hall algebra $\iH(\ca)$. It turns out that the central reduction of the $\imath$Hall algebra provides a realization of the $\imath$quantum group.

Observe that the construction of $\imath$Hall algebras $\iH(\ca)$ is complicated, and it is difficult to obtain the basis of $\iH(\ca)$. In this paper, inspired by Lu-Wang's multiplication formula, we hope to define a new kind of Hall algebra $\mathcal{H}_{\Delta}(\ca)$ following Ringel's framework, called the {\em $\Delta$-Hall algebra}, in order to provide a new way to realize the $\imath$quantum groups.

\medskip


Let $\sqq=\sqrt{q}$.
For any three objects $A,B,M\in \ca$, we define the \emph{$\Delta$-Hall number} as follows,
 \begin{equation}\label{F hat}\widehat{F}^M_{AB}=\sum_{[L],[I],[N]} \sqq^{\langle L,I,N\rangle}\cdot \frac{a_L a_I a_N }{a_M}\cdot F_{LI}^BF_{NL}^MF_{IN}^A,\end{equation}
where
\begin{equation}\label{short hand notation for LIN}\langle L,I,N\rangle=\langle L,I\rangle+\langle I,I\rangle+\langle I,N\rangle-\langle L,N\rangle
.\end{equation}
Roughly speaking, the $\Delta$-Hall number $\widehat{F}_{AB}^M$ calculates the number of three-cycles of the form

 \[
\begin{tikzpicture}
    	\node (1) at (0,0) {$N$};
	\node (2) at (1,0) {$M$};
	\node (3) at (2,0){$L$.};
	\node (6) at (0.5,0.9){$A$};
	\node (5) at (1,1.8){$I$};
	\node (4) at (1.5,.9){$B$};
	\draw[->] (2) --node[above ]{} (1);
	\draw[->] (3) --node[below ]{} (2);
	\draw[->] (4) --node[above]{} (3);
	\draw[->] (5) --node[below ]{} (4);
	\draw[->] (6) --node[below ]{} (5);
	\draw[->] (1) --node[below ]{} (6);
	\end{tikzpicture}
\]
Here, each two adjacent arrows on the same line form a short exact sequence in $\ca$.

We have the following associativity for the $\Delta$-Hall numbers, where Green's formula \eqref{Green's formula} plays key roles.

\begin{proposition}
\label{lem2.4}
For any objects $A,B,C,M\in\mathcal{A}$, the following equation holds:
\begin{equation}\label{associativity}
\sum_{[X]}\widehat{F}_{AB}^X\widehat{F}_{XC}^M=\sum_{[Y]}\widehat{F}_{AY}^M\widehat{F}_{BC}^Y.
\end{equation}
\end{proposition}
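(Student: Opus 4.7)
The plan is to unwind both sides of \eqref{associativity} via the definition \eqref{F hat} and then collapse the inner sums by Green's formula \eqref{Green's formula}.

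First I would expand $\sum_{[X]}\widehat{F}_{AB}^X\widehat{F}_{XC}^M$ by substituting \eqref{F hat} for each factor. This produces a six-fold summation over auxiliary isomorphism classes $[L_1],[I_1],[N_1]$ (coming from $\widehat{F}_{AB}^X$) and $[L_2],[I_2],[N_2]$ (coming from $\widehat{F}_{XC}^M$), weighted by $\sqq^{\langle L_1,I_1,N_1\rangle+\langle L_2,I_2,N_2\rangle}$ and by a ratio of automorphism cardinalities, together with six ordinary Hall numbers. After the expansion the only dependence on $[X]$ sits in the factor $\tfrac{1}{a_X}F^X_{N_1L_1}F^X_{I_2N_2}$, which has exactly the shape of the left-hand side of Green's formula. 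Applying Green's formula converts the $[X]$-sum into a quadruple sum over four new objects, introducing an extra factor $q^{-\langle\cdot,\cdot\rangle}$.

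I would then perform the analogous manipulation on $\sum_{[Y]}\widehat{F}_{AY}^M\widehat{F}_{BC}^Y$: expand in auxiliary objects $[L'_1],\dots,[N'_2]$ and apply Green's formula to the inner factor $\tfrac{1}{a_Y}F^Y_{L'_1I'_1}F^Y_{N'_2L'_2}$. After these steps, both sides are expressed as sums, indexed by the same number of auxiliary isomorphism classes, of products of classical Hall numbers, automorphism ratios, and powers of $\sqq$. Structurally, both summations should be read as counting the same object: a $3\times 3$ refinement of a filtration of $M$ whose rows and columns are short exact sequences and whose perimeter reproduces the three-cycle diagram drawn before the proposition. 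The matching of summands on the two sides is then obtained from the associativity $F^{M}_{ABC}$ of ordinary Hall numbers applied to the rows and columns of this refinement.

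The main obstacle will be bookkeeping the $\sqq$-exponents. On the LHS the total exponent equals $\langle L_1,I_1,N_1\rangle+\langle L_2,I_2,N_2\rangle$ minus twice the Euler pairing produced by Green's formula; the RHS yields a similar expression in the primed indices. To identify the two, I would repeatedly invoke the biadditivity of $\langle-,-\rangle$ under the short exact sequences that appear among the nine stable subquotients of the $3\times 3$ refinement, rewriting every exponent in a common basis of Euler pairings among these subquotients. Once this normal form is reached, the two sides coincide term by term, completing the proof.
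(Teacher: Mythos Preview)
Your proposal is correct and follows essentially the same route as the paper: expand both sides via \eqref{F hat}, apply Green's formula to the inner sum over $[X]$ (resp.\ $[Y]$), then use the associativity of ordinary Hall numbers to collapse pairs of $F$'s into triple Hall numbers $F^{\bullet}_{\bullet\bullet\bullet}$, after which the two sides are matched by a relabelling of the six remaining summation indices and a direct biadditivity check on the $\sqq$-exponents. Your description of the combinatorics as a $3\times 3$ refinement is a helpful gloss on what the paper carries out purely computationally.
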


\begin{proof}
 Consider the left side of \eqref{associativity}, which calculates the following three-cycles of exact sequences:
\[
\begin{tikzpicture}
	\node (1) at (0,0) {$N_1$};
	\node (2) at (1,0) {$X$};
	\node (3) at (2,0){$L_1$};
	\node (6) at (0.5,0.9){$A$};
	\node (5) at (1,1.8){$I_1$};
	\node (4) at (1.5,.9){$B$};
	\draw[->] (2) --node[above ]{} (1);
	\draw[->] (3) --node[below ]{} (2);
	\draw[->] (4) --node[above]{} (3);
	\draw[->] (5) --node[below ]{} (4);
	\draw[->] (6) --node[below ]{} (5);
	\draw[->] (1) --node[below ]{} (6);
	
		\node (1) at (0+5,0) {$N_2$};
	\node (2) at (1+5,0) {$M$};
	\node (3) at (2+5,0){$L_2$.};
	\node (6) at (0.5+5,0.9){$X$};
	\node (5) at (1+5,1.8){$I_2$};
	\node (4) at (1.5+5,.9){$C$};
	\draw[->] (2) --node[above ]{} (1);
	\draw[->] (3) --node[below ]{} (2);
	\draw[->] (4) --node[above]{} (3);
	\draw[->] (5) --node[below ]{} (4);
	\draw[->] (6) --node[below ]{} (5);
	\draw[->] (1) --node[below ]{} (6);
	\end{tikzpicture}
\]
We have
\begin{align*}
    \sum_{[X]}\widehat{F}_{AB}^X\widehat{F}_{XC}^M&=\sum_{[X]}\sum_{[L_1],[I_1],[N_1]} \sqq^{\langle L_1,I_1,N_1\rangle }\cdot \frac{a_{L_1}a_{I_1} a_{N_1}}{a_X}  \cdot F_{L_1 I_1}^BF_{N_1 L_1}^XF_{I_1 N_1}^A\\&
    \qquad\cdot \sum_{[L_2],[I_2],[N_2]} \sqq^{\langle L_2,I_2,N_2\rangle}
    \cdot \frac{a_{L_2} a_{I_2} a_{N_2}}{a_M}  \cdot F_{L_2 I_2}^CF_{N_2 L_2}^MF_{I_2 N_2}^X\\&=
    \sum_{\substack{[L_1],[I_1],[N_1],\\ [L_2],[I_2],[N_2]}
} \sqq^{\langle L_1,I_1,N_1\rangle+ \langle L_2,I_2,N_2\rangle}
    \cdot \frac{a_{L_1} a_{I_1} a_{N_1} a_{L_2} a_{I_2} a_{N_2}}{a_M}  \\&
    \qquad\cdot  F_{L_1 I_1}^BF_{I_1 N_1}^AF_{L_2 I_2}^CF_{N_2 L_2}^M
    \sum_{[X]}F_{N_1 L_1}^XF_{I_2 N_2}^X\frac{1}{a_X}.
    \end{align*}
Using Green's formula, we obtain

    \begin{align*}
    \sum_{[X]}F_{N_1 L_1}^XF_{I_2 N_2}^X\frac{1}{a_X}=\sum_{[G_1],[G_2],[G_3],[G_4]}q^{-\langle G_3,G_1\rangle}F_{G_2G_1}^{L_1} F_{G_3 G_2}^{I_2} F_{G_3G_4}^{N_1} F_{G_4 G_1}^{N_2}\frac{a_{G_1}a_{G_2}a_{G_3}a_{G_4}}{a_{L_1}a_{I_2}a_{N_1}a_{N_2}}.
    \end{align*}
In the remaining of the proof, we use the simplified notation $\langle \sum\limits_{i}A_i,\sum\limits_{j}D_j\rangle$ (without confusion) to denote $\sum\limits_{i,j}\langle A_i,D_j\rangle$ for any $A_i,D_j\in\ca$.
Then
    we get
    \begin{align}\notag
        \sum_{[X]}\widehat{F}_{AB}^X\widehat{F}_{XC}^M&=
    \sum_{\substack{[L_1],[I_1],[N_1],\\ [L_2],[I_2],[N_2]}
}
 \sqq^{\langle L_1,I_1,N_1\rangle+ \langle L_2,I_2,N_2\rangle}
    \cdot \frac{a_{L_1} a_{I_1} a_{N_1} a_{L_2} a_{I_2} a_{N_2}}{a_M}  \cdot F_{L_1 I_1}^BF_{I_1 N_1}^AF_{L_2 I_2}^CF_{N_2 L_2}^M \\\notag&\qquad\cdot
    \sum_{[G_1],[G_2],[G_3],[G_4]}q^{-\langle G_3,G_1\rangle}F_{G_2G_1}^{L_1} F_{G_3 G_2}^{I_2} F_{G_3G_4}^{N_1} F_{G_4 G_1}^{N_2}\frac{a_{G_1}a_{G_2}a_{G_3}a_{G_4}}{a_{L_1}a_{I_2}a_{N_1}a_{N_2}}\\\notag&=
    \sum_{\substack{[L_2],[I_1],\\ [G_1],[G_2],[G_3],[G_4]}
} \sqq^{\langle G_1+G_2,I_1,G_3+G_4\rangle+ \langle L_2,G_2+G_3,G_4+G_1\rangle-2\langle G_3,G_1\rangle}\cdot\frac{a_{I_1} a_{L_2} a_{G_1}a_{G_2}a_{G_3}a_{G_4}}{a_M}
     \\\notag& \qquad\cdot \sum_{[L_1]}F_{L_1 I_1}^BF_{G_2G_1}^{L_1} \sum_{[N_1]}F_{I_1 N_1}^AF_{G_3G_4}^{N_1}
    \sum_{[I_2]}F_{L_2 I_2}^CF_{G_3 G_2}^{I_2}
    \sum_{[N_2]}F_{N_2 L_2}^MF_{G_4 G_1}^{N_2}\\\notag&=
     \sum_{\substack{[L_2],[I_1],\\\notag [G_1],[G_2],[G_3],[G_4]}
} \sqq^{\langle G_1+G_2,I_1,G_3+G_4\rangle+ \langle L_2,G_2+G_3,G_4+G_1\rangle-2\langle G_3,G_1\rangle}
     \\\label{left hand side}& \qquad\cdot\frac{a_{I_1} a_{L_2} a_{G_1}a_{G_2}a_{G_3}a_{G_4}}{a_M}\cdot F_{G_2G_1I_1}^{B} F_{I_1G_3G_4}^{A}
    F_{L_2G_3 G_2}^{C}
F_{G_4 G_1L_2}^{M}.    \end{align}

    On the other hand, consider the right-hand side of \eqref{associativity}, which calculates the following three-cycles of exact sequences:  \[
    \begin{tikzpicture}
	\node (1) at (0,0) {$N_3$};
	\node (2) at (1,0) {$M$};
	\node (3) at (2,0){$L_3$};
	\node (6) at (0.5,0.9){$A$};
	\node (5) at (1,1.8){$I_3$};
	\node (4) at (1.5,.9){$Y$};
	\draw[->] (2) --node[above ]{} (1);
	\draw[->] (3) --node[below ]{} (2);
	\draw[->] (4) --node[above]{} (3);
	\draw[->] (5) --node[below ]{} (4);
	\draw[->] (6) --node[below ]{} (5);
	\draw[->] (1) --node[below ]{} (6);
	
		\node (1) at (0+5,0) {$N_4$};
	\node (2) at (1+5,0) {$Y$};
	\node (3) at (2+5,0){$L_4$.};
	\node (6) at (0.5+5,0.9){$B$};
	\node (5) at (1+5,1.8){$I_4$};
	\node (4) at (1.5+5,.9){$C$};
	\draw[->] (2) --node[above ]{} (1);
	\draw[->] (3) --node[below ]{} (2);
	\draw[->] (4) --node[above]{} (3);
	\draw[->] (5) --node[below ]{} (4);
	\draw[->] (6) --node[below ]{} (5);
	\draw[->] (1) --node[below ]{} (6);
	\end{tikzpicture}
    \]
We have
    \begin{align*}
    \sum_{[Y]}\widehat{F}_{AY}^M\widehat{F}_{BC}^Y&=\sum_{[Y]}\sum_{[L_3],[I_3],[N_3]} \sqq^{\langle L_3,I_3,N_3\rangle }\cdot \frac{a_{L_3}a_{I_3} a_{N_3}}{a_M}  \cdot F_{L_3 I_3}^YF_{N_3 L_3}^MF_{I_3 N_3}^A\\&\qquad
    \cdot \sum_{[L_4],[I_4],[N_4]} \sqq^{\langle L_4,I_4,N_4\rangle}
    \cdot \frac{a_{L_4} a_{I_4} a_{N_4}}{a_Y}  \cdot F_{L_4 I_4}^CF_{N_4 L_4}^YF_{I_4 N_4}^B\\&=
    \sum_{\substack{[L_3],[I_3],[N_3],\\ [L_4],[I_4],[N_4]}
} \sqq^{\langle L_3,I_3,N_3\rangle+ \langle L_4,I_4,N_4\rangle}
    \cdot \frac{a_{L_3} a_{I_3} a_{N_3} a_{L_4} a_{I_4} a_{N_4}}{a_M}  \\&\qquad\cdot  F_{I_3 N_3}^AF_{N_3 L_3}^MF_{L_4 I_4}^CF_{I_4 N_4}^B
    \sum_{[Y]}F_{L_3 I_3}^YF_{N_4 L_4}^Y\frac{1}{a_Y}.
    \end{align*}
By using Green's formula again, we get
    \begin{align*}
        \sum_{[Y]}\widehat{F}_{AY}^M\widehat{F}_{BC}^Y&=
    \sum_{\substack{[L_3],[I_3],[N_3],\\ [L_4],[I_4],[N_4]}
}
 \sqq^{\langle L_3,I_3,N_3\rangle+ \langle L_4,I_4,N_4\rangle}
    \cdot \frac{a_{L_3} a_{I_3} a_{N_3} a_{L_4} a_{I_4} a_{N_4}}{a_M} \cdot F_{I_3 N_3}^AF_{N_3 L_3}^MF_{L_4 I_4}^CF_{I_4 N_4}^B \\&\qquad\cdot
    \sum_{[H_1],[H_2],[H_3],[H_4]}q^{-\langle H_3,H_1\rangle}F_{H_2H_1}^{L_4} F_{H_3 H_2}^{L_3} F_{H_3H_4}^{N_4} F_{H_4 H_1}^{I_3}\frac{a_{H_1}a_{H_2}a_{H_3}a_{H_4}}{a_{L_4}a_{I_3}a_{N_4}a_{L_3}}\\&=
    \sum_{\substack{[N_3],[I_4],\\ [H_1],[H_2],[H_3],[H_4]}
} \sqq^{\langle H_2+H_3,H_4+H_1,N_3\rangle+ \langle H_1+H_2,I_4,H_3+H_4\rangle-2\langle H_3,H_1\rangle}
    \cdot \frac{a_{N_3}a_{I_4}a_{H_1} a_{H_2}a_{H_3}a_{H_4}}{a_M}  \\&\qquad\cdot \sum_{[I_3]}F_{I_3N_3}^AF_{H_4H_1}^{I_3}  \sum_{[L_3]}F_{N_3L_3}^MF_{H_3H_2}^{L_3}
    \sum_{[L_4]}F_{L_4 I_4}^CF_{H_2 H_1}^{L_4}
    \sum_{[N_4]}F_{I_4N_4}^BF_{H_3 H_4}^{N_4} \\&=
    \sum_{\substack{[N_3],[I_4],\\ [H_1],[H_2],[H_3],[H_4]}
} \sqq^{\langle H_2+H_3,H_4+H_1,N_3\rangle+ \langle H_1+H_2,I_4,H_3+H_4\rangle-2\langle H_3,H_1\rangle}       \\
    &\qquad{\cdot\frac{a_{N_3}a_{I_4}a_{H_1} a_{H_2}a_{H_3}a_{H_4}}{a_M}\cdot F^A_{H_4H_1N_3}  F_{N_3H_3H_2}^M    F_{H_2 H_1 I_4}^C     F_{I_4H_3 H_4}^{B}. }   \end{align*}
Observe that the last summation above is taken over $[N_3], [I_4],[H_1], [H_2],[H_4],[H_3]$ for arbitrary isomorphism classes from $\ca$. By replacing them by $[G_4], [G_2],[G_3],[L_2],[I_1],[G_1]$ respectively, we obtain

 \begin{align}\notag
        \sum_{[Y]}\widehat{F}_{AY}^M\widehat{F}_{BC}^Y&=
    \sum_{\substack{[G_4],[G_2],\\ [G_3],[L_2],[I_1],[G_1]}
} \sqq^{\langle L_2+G_1,I_1+G_3,G_4\rangle+ \langle G_3+L_2,G_2,G_1+I_1\rangle-2\langle G_1,G_3\rangle}\\\label{right hand side}&
    \qquad\cdot \frac{a_{G_4} a_{G_2}a_{G_3}a_{L_2}a_{G_1}a_{I_1}}{a_M}
    \cdot F_{I_1G_3G_4}^A
    F_{G_4G_1L_2}^{M}
    F_{L_2G_3 G_2}^CF_{G_2G_1I_1}^B.
    \end{align}

    { Comparing \eqref{left hand side} and \eqref{right hand side}, in order to prove \eqref{associativity}, it remains to show the following relation:}
    \[
   \langle G_1+G_2,I_1,G_3+G_4\rangle+ \langle L_2,G_2+G_3,G_4+G_1\rangle-2\langle G_3,G_1\rangle\]\[=\langle L_2+G_1,I_1+G_3,G_4\rangle+ \langle G_3+L_2,G_2,G_1+I_1\rangle-2\langle G_1,G_3\rangle,
    \]
{which follows easily but tediously, via expanding both sides of the equation by using \eqref{short hand notation for LIN}.}
\end{proof}

\begin{theorem}
\label{thm2.4}
The $\mathbb{Q}(\sqq)$-vector space $\mathcal{H}_{\Delta}(\mathcal{A})$ with the basis
$
\big\{[M ]\mid [M]\in{\rm Iso}(\mathcal{A})\big\},
$
endowed with the multiplication defined by\[
[A]*[B]=\sum_{[M]}\widehat{F}_{AB}^M\cdot [M],
\] forms an associative algebra with the unit $[0]$, {called the
\emph{$\Delta$-Hall algebra} of $\ca$.}
\end{theorem}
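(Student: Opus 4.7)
The plan is to verify the two algebra axioms — associativity of $*$ and the unit property of $[0]$ — directly from the definition of the $\Delta$-Hall multiplication. The first will be an immediate consequence of Proposition~\ref{lem2.4}, which packages the deep combinatorial identity; the second reduces to an elementary calculation exploiting the vanishing of ordinary Hall numbers at the zero object.

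For associativity, I would simply expand both $([A]*[B])*[C]$ and $[A]*([B]*[C])$ using the defining formula for $*$:
\begin{align*}
([A]*[B])*[C] &= \sum_{[M]}\Bigl(\sum_{[X]}\widehat{F}_{AB}^{X}\widehat{F}_{XC}^{M}\Bigr)[M],\\
[A]*([B]*[C]) &= \sum_{[M]}\Bigl(\sum_{[Y]}\widehat{F}_{AY}^{M}\widehat{F}_{BC}^{Y}\Bigr)[M].
\end{align*}
Matching the coefficient of each basis vector $[M]$ on the two sides then reduces the claim to exactly the identity \eqref{associativity} proved in Proposition~\ref{lem2.4}.

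For the unit axiom, I would verify that $\widehat{F}^M_{A,0} = \widehat{F}^M_{0,A} = \delta_{[M],[A]}$ for all $A$ and $M$. Substituting $B = 0$ into the definition of $\widehat{F}^M_{AB}$, the factor $F^{0}_{LI}$ forces $L \cong I \cong 0$; the remaining Hall numbers $F^M_{N0}$ and $F^A_{0N}$ then reduce to $\delta_{[N],[M]}$ and $\delta_{[N],[A]}$, and since $\langle 0,0,N\rangle = 0$ and $a_L a_I a_N / a_M = 1$ in this case, the whole triple sum collapses to $\delta_{[M],[A]}$. A symmetric calculation with $A = 0$ uses $F^{0}_{IN}$ to force $I \cong N \cong 0$, after which the remaining two Hall numbers pin $[L] = [M] = [B]$ and give $\widehat{F}^M_{0,B} = \delta_{[M],[B]}$. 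The only substantive ingredient is associativity, and the hard work behind it — translating the three-cycle combinatorics through Green's formula — has already been carried out in Proposition~\ref{lem2.4}, so I do not anticipate any further obstacle.
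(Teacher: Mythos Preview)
Your proposal is correct and follows exactly the same route as the paper: both expand $([A]*[B])*[C]$ and $[A]*([B]*[C])$ and match coefficients of $[M]$ to reduce associativity to Proposition~\ref{lem2.4}. The only difference is that the paper dismisses the unit axiom with ``Clearly, $[0]$ is the unit by definition,'' whereas you spell out the computation showing $\widehat{F}^M_{A,0}=\widehat{F}^M_{0,A}=\delta_{[M],[A]}$; your added detail is correct and harmless.
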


\begin{proof}
We have\begin{align*}
    &([A]*[B])*[C]\\&=
    \sum_{[X]}\widehat{F}_{AB}^X\cdot [X]*[C]\\&=
    \sum_{[X]}\widehat{F}_{AB}^X\cdot\sum_{[M]}\widehat{F}_{XC}^M \cdot[M]\\&=
    \sum_{[M]}\sum_{[X]}\widehat{F}_{AB}^X\widehat{F}_{XC}^M \cdot[M].
\end{align*}

    On the other hand, we have
\begin{align*}
    &[A]*([B]*[C])\\&=
    [A]*(\sum_{[Y]}\widehat{F}_{BC}^Y\cdot [Y])\\&=
    \sum_{[Y]}\widehat{F}_{BC}^Y\cdot [A]*[Y]\\&=
    \sum_{[M]}\sum_{[Y]}\widehat{F}_{AY}^M\widehat{F}_{BC}^Y \cdot[M].
    \end{align*}
    By Proposition \ref{lem2.4}, we immediately get\[
    ([A]*[B])*[C]=[A]*([B]*[C]).
    \]
Clearly, $[0]$ is the unit by definition. Then we are done.
\end{proof}

\section{$\Delta$-Hall algebra and $\imath$quantum group}

In this section, we establish an isomorphism between $\Delta$-Hall algebras and 1-periodic derived Hall algebras, and then provide a new realization of $\imath$quantum groups.

\subsection{$\Delta$-Hall algebras and 1-periodic derived Hall algebras}

Recall that $\mathcal{A}$ is an essentially small hereditary abelian category over the finite field $\mathbf{k}=\mathbb F_q$. Let $\mathcal{D}_1(\mathcal{A})$ be the derived category of 1-periodic complexes on $\mathcal{A}$.
For any object $M^\bullet=(M,d)\in\mathcal{D}_1(\mathcal{A})$, there exists an isomorphism $(M,d)\cong (H(M^\bullet),0)$ in $\mathcal{D}_1(\mathcal{A})$, where $H(M^\bullet)=\Ker(d)/\Im(d)\in\ca$ is the cohomology group of $M^\bullet$. Hence, the isoclasses $\rm{Iso}(\mathcal{D}_1(\mathcal{A}))$ of $\mathcal{D}_1(\mathcal{A})$ coincide with the isoclasses $\rm{Iso}(\mathcal{A})$ of $\mathcal{A}$. From now on, we identify $\rm{Iso}(\mathcal{D}_1(\mathcal{A}))$ and $\rm{Iso}(\mathcal{A})$.

For any object $M\in \mathcal{A}$, we denote $|{\rm Aut}_{\mathcal{D}_1(\mathcal{A})}(M)|$ by $\tilde{a}_{M}$. Then we have $$\tilde{a}_M=a_M\cdot |\Ext^1_\mathcal{A}(M,M)|.$$
For any objects $A,B,M\in\mathcal{A}$, following \cite{XC13} we use the notations
\[
(A,B)_M:=\{f\in\Hom_{\mathcal{D}_1(\mathcal{A})}(A,B)|{\rm Cone}(f)\cong M\}
\]
and
\[
\{A,B\}:=\frac{1}{|\Hom_{\mathcal{D}_1(\mathcal{A})}(A,B)|}.
\]
Then by \cite[Corollary 2.7]{XC13} we have
\[
\frac{|(B,M)_A|}{\tilde{a}_B}\sqrt{\frac{\{B,M\}}{\{B,B\}}}=\frac{|(M,A)_B|}{\tilde{a}_A}\sqrt{\frac{\{M,A\}}{\{A,A\}}}.
\]
We denote this number by $G_{AB}^{M}$ in this paper, and call it the \emph{derived Hall number}, which satisfies the following
derived Riedtmann-Peng formula by \cite[Proposition 3.3]{SCX18}:

\begin{align*}\label{derive RP formula}
G_{AB}^M=\frac{\tilde{a}_M\cdot|(A,B)_M|}{\tilde{a}_A\cdot\tilde{a}_B}\sqrt{\frac{\{A,B\}\{M,M\}}{\{A,A\}\{B,B\}}}.
\end{align*}

The \emph{1-periodic derived Hall algebra} $\mathcal{DH}_1(\mathcal{A})$ of $\ca$ is a $\mathbb{Q}(\sqq)$-vector space with the basis $\big\{u_{[M]}\,|\, [M]\in \rm{Iso}(\mathcal{D}_1(\mathcal{A})):=\rm{Iso}(\mathcal{A})\big\}$, endowed with the multiplication defined by\[
u_{[A]}*u_{[B]}=\sum_{[M]}G_{AB}^M\cdot u_{[M]}.
\]


The following proposition indicates that the derived Hall number $G_{AB}^M$ can be calculated inside the abelian category $\ca$.

\begin{proposition}
\cite[Proposition 3.5]{CLR}
For any objects $A,B,M\in\mathcal{A}$, we have
\begin{equation}\label{G_AB^M}
G_{AB}^M=\sum_{[L],[I],[N]}\sqq^{\langle L,I\rangle+\langle I,I\rangle+\langle I,N\rangle-\langle L,N\rangle}\cdot \frac{a_L a_I a_N}{a_A a_B}\cdot F^B_{LI}F_{NL}^MF^A_{IN}.
\end{equation}
\end{proposition}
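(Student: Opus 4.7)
My plan is to prove the identity by a direct morphism count in $\mathcal{D}_1(\ca)$, translated into $\ca$-level Hall data. The first step is to apply the derived Riedtmann-Peng formula to rewrite $G_{AB}^M$ as a constant times the cardinality of morphisms $f$ in $\mathcal{D}_1(\ca)$ between the relevant pair of objects with cone isomorphic to $M$, together with an explicit $\sqrt{\{-,-\}}$ factor.

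Next, since $\ca$ is hereditary, I would use $\Hom_{\mathcal{D}_1(\ca)}(X,Y)\cong \Hom_\ca(X,Y)\oplus \Ext^1_\ca(X,Y)$ together with the fact that $[1]$ acts as the identity in $\mathcal{D}_1(\ca)$ to stratify such morphisms by the triple $(L,I,N)$ of kernel, image and cokernel of the underlying $\Hom_\ca$-component of $f$. The cohomology long exact sequence from the defining triangle becomes a periodic exact sequence in $\ca$ that breaks into three short exact sequences with middle terms $A$, $B$, $M$, counted respectively by $F^B_{LI}$, $F^A_{IN}$ and $F^M_{NL}$. The total number of morphisms in $\mathcal{D}_1(\ca)$ realising a prescribed triple is then the product of these three Hall numbers, times the automorphism factor $a_I$ identifying the common middle piece and a combinatorial weight coming from the $\Ext^1$-component of $f$.

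The third step is the combinatorial assembly: substitute the stratified count into the Riedtmann-Peng expression and collapse the scalar factors. This uses $\tilde{a}_X = a_X \,|\Ext^1_\ca(X,X)|$ together with $\{X,Y\}=|\Hom_\ca(X,Y)|^{-1}$ and the hereditary identity $|\Ext^1_\ca(X,Y)|\,|\Hom_\ca(X,Y)|^{-1}=q^{-\langle X,Y\rangle}$, so that the four $\sqrt{\{-,-\}}$ factors and the $\Ext^1$-counts combine into powers of $\sqq$, while the $\Ext^1(X,X)$ self-contributions cancel between $\tilde{a}_A\tilde{a}_B/\tilde{a}_M$ and the morphism count, leaving the clean prefactor $a_La_Ia_N/(a_Aa_B)$.

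I expect the main obstacle to be exactly this last bookkeeping step: proving that the surviving Euler-form exponents assemble into the precise combination $\langle L,I\rangle + \langle I,I\rangle + \langle I,N\rangle - \langle L,N\rangle$ (rather than a more symmetric-looking expression) requires repeated application of biadditivity of $\langle-,-\rangle$ along each of the three short exact sequences together with a careful accounting of which direction (covariant vs.\ contravariant) each $\Hom/\Ext^1$ factor enters. Once this Euler identity is secured, the rest of the proof is a mechanical rearrangement.
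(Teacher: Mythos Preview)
The paper does not give its own proof of this proposition: it is quoted verbatim as \cite[Proposition 3.5]{CLR} and used as a black box. So there is no in-paper argument to compare against; what you have written is a proposal for the proof that the authors defer to the cited reference.

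Your outline is a reasonable and essentially standard route to such a formula: start from the derived Riedtmann--Peng expression for $G^M_{AB}$, use that in the hereditary $1$-periodic situation every morphism in $\mathcal{D}_1(\ca)$ decomposes as a $\Hom_\ca$-part plus an $\Ext^1_\ca$-part, stratify by the kernel/image/cokernel $(N,I,L)$ of the $\Hom_\ca$-component, recognise the three resulting short exact sequences as giving the factors $F^A_{IN}F^B_{LI}F^M_{NL}$, and collapse the scalar prefactors into a power of $\sqq$ via the Euler form. This is almost certainly the same strategy as in \cite{CLR}, so you are not diverging from the intended argument.

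One genuine slip to fix before the bookkeeping will close: you write $\{X,Y\}=|\Hom_\ca(X,Y)|^{-1}$, but by definition $\{X,Y\}=|\Hom_{\mathcal{D}_1(\ca)}(X,Y)|^{-1}$, and since $[1]=\mathrm{id}$ in $\mathcal{D}_1(\ca)$ one has $|\Hom_{\mathcal{D}_1(\ca)}(X,Y)|=|\Hom_\ca(X,Y)|\cdot|\Ext^1_\ca(X,Y)|$. This extra $\Ext^1$ factor is exactly what is needed for the square-root terms $\sqrt{\{A,B\}\{M,M\}/\{A,A\}\{B,B\}}$ to become genuine powers of $\sqq$ via $q^{-\langle X,Y\rangle}=|\Ext^1_\ca(X,Y)|/|\Hom_\ca(X,Y)|$; with your stated formula for $\{X,Y\}$ the exponents will not match. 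Once you correct this, the Euler-form manipulation you flag as the ``main obstacle'' is indeed the only nontrivial remaining step, and it goes through by expanding $\langle A,-\rangle$, $\langle B,-\rangle$, $\langle M,-\rangle$ along the three short exact sequences with $\widehat{A}=\widehat{I}+\widehat{N}$, $\widehat{B}=\widehat{L}+\widehat{I}$, $\widehat{M}=\widehat{N}+\widehat{L}$.
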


Comparing \eqref{G_AB^M} with \eqref{F hat}, and using \eqref{short hand notation for LIN},
we can obtain an isomorphism from the 1-periodic derived Hall algebra $\mathcal{DH}_1(\mathcal{A})$ to the $\Delta$-Hall algebra $\mathcal{H}_{\Delta}(\mathcal{A})$.

\begin{proposition}
\label{prop3.2}
There is an algebra isomorphism \begin{align*}
\Xi_\ca:\mathcal{DH}_1(\mathcal{A})
&\longrightarrow \mathcal{H}_{\Delta}(\mathcal{A}).\\
u_{[M]}&\mapsto \frac{1}{a_M}\cdot [M]
\end{align*}
\end{proposition}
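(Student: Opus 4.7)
The plan is to verify that the stated linear map $\Xi_\ca$ is both a bijection of vector spaces and compatible with the two multiplications. Bijectivity is immediate: since ${\rm Iso}(\mathcal{D}_1(\ca))={\rm Iso}(\ca)$ under the identification $(M,d)\cong(H(M^\bullet),0)$, both sides share the same indexing set, and $\Xi_\ca$ sends each basis vector $u_{[M]}$ to the nonzero rescaling $\frac{1}{a_M}[M]$ of the corresponding basis vector of $\mathcal{H}_\Delta(\ca)$. Therefore the only nontrivial content is multiplicativity.

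To check multiplicativity, I would juxtapose the two structure-constant formulas. On the derived side, the cited formula \eqref{G_AB^M} from \cite[Proposition 3.5]{CLR} expresses $G^M_{AB}$ as a sum over $[L],[I],[N]$ with a prefactor $\frac{a_La_Ia_N}{a_Aa_B}$, while the $\Delta$-Hall number \eqref{F hat} has exactly the same exponent on $\sqq$ (by definition \eqref{short hand notation for LIN}) and the same Hall-number triple product $F^B_{LI}F^M_{NL}F^A_{IN}$, but with prefactor $\frac{a_La_Ia_N}{a_M}$. Comparing the two yields the key identity
\begin{equation*}
\widehat{F}^M_{AB}\;=\;\frac{a_Aa_B}{a_M}\,G^M_{AB}.
\end{equation*}

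With this identity in hand, the verification is a one-line computation. On one side,
\begin{equation*}
\Xi_\ca(u_{[A]}*u_{[B]})\;=\;\sum_{[M]}G^M_{AB}\,\Xi_\ca(u_{[M]})\;=\;\sum_{[M]}\frac{G^M_{AB}}{a_M}\,[M].
\end{equation*}
On the other side,
\begin{equation*}
\Xi_\ca(u_{[A]})*\Xi_\ca(u_{[B]})\;=\;\frac{1}{a_Aa_B}\,[A]*[B]\;=\;\sum_{[M]}\frac{\widehat{F}^M_{AB}}{a_Aa_B}\,[M],
\end{equation*}
and the boxed identity makes the two expressions agree coefficient-by-coefficient. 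Finally, $\Xi_\ca(u_{[0]})=[0]$, so units are preserved.

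There is essentially no obstacle here once one sets up the comparison of prefactors carefully; the nontrivial input has already been absorbed into the earlier result \cite[Proposition 3.5]{CLR}, which rewrites the derived Hall number $G^M_{AB}$, originally defined through Homs and cones in $\mathcal{D}_1(\ca)$, as a sum over abelian-category data. The main thing to be careful about is bookkeeping of the automorphism factors $a_\bullet$ versus $\tilde a_\bullet$ and the $\sqq$-exponent, but both are essentially built into the statement via \eqref{short hand notation for LIN}, so no recomputation is required.
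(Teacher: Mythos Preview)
Your proposal is correct and follows essentially the same approach as the paper: both arguments reduce multiplicativity to a direct comparison of the structure constants via the formula \eqref{G_AB^M}, yielding the identity $\widehat{F}^M_{AB}=\frac{a_Aa_B}{a_M}G^M_{AB}$, and bijectivity is handled by noting that the two algebras share the same basis indexed by ${\rm Iso}(\ca)$. The only cosmetic difference is that you isolate this identity explicitly, whereas the paper writes out both sides in full before comparing.
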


\begin{proof}
Recall that we have the identity $\rm{Iso}(\mathcal{D}_1(\mathcal{A}))=\rm{Iso}(\mathcal{A})$. For any $A,B\in\mathcal{A}$, we have
\begin{align*}
\Xi_\ca(u_{[A]}*u_{[B]})&=\Xi_\ca(\sum_{[M]}G_{AB}^M\cdot u_{[M]})\\&=
\sum_{[M]}\ \sum_{[L],[I],[N]}\sqq^{\langle L,I,N\rangle}\cdot \frac{a_L a_I a_N}{a_A a_B}\cdot F^B_{LI}F_{NL}^MF^A_{IN}\cdot \Xi_\ca(u_{[M]})\\&=
\sum_{[M]}\ \sum_{[L],[I],[N]}\sqq^{\langle L,I,N\rangle}\cdot \frac{a_L a_I a_N}{a_A a_B a_M}\cdot F^B_{LI}F_{NL}^MF^A_{IN}\cdot [M].
\end{align*}
On the other hand,
\begin{align*}
    \Xi_\ca(u_{[A]})*\Xi_\ca(u_{[B]})&=\frac{1}{a_A a_B} [A]* [B]\\&=\frac{1}{a_A a_B}\cdot \sum_{[M]}\widehat{F}_{AB}^M\cdot [M]\\&=\frac{1}{a_Aa_B}\cdot \sum_{[M]}\ \sum_{[L],[I],[N]}\sqq^{\langle L,I,N\rangle}\cdot \frac{a_L a_I a_N}{a_M}\cdot F^B_{LI}F_{NL}^MF^A_{IN}\cdot [M]\\&=
    \sum_{[M]}\ \sum_{[L],[I],[N]}\sqq^{\langle L,I,N\rangle}\cdot \frac{a_L a_I a_N}{a_A a_B a_M}\cdot F^B_{LI}F_{NL}^MF^A_{IN}\cdot [M].
\end{align*}
Hence $\Xi_\ca(u_{[A]})*\Xi_\ca(u_{[B]})=\Xi_\ca(u_{[A]}*u_{[B]})$ and then $\Xi_\ca$ is an algebra homomorphism. Since the algebras $\mathcal{DH}_1(\mathcal{A})$
and $\mathcal{H}_{\Delta}(\mathcal{A})$ share the same basis $\rm{Iso}(\mathcal{D}_1(\mathcal{A}))=\rm{Iso}(\mathcal{A})$, we obtain that $\Xi_\ca$ is bijective. Therefore, $\Xi_\ca$ is an isomorphism of algebras.
\end{proof}

\subsection{New realization of $\imath$Quantum groups of split type}

Now we will recall the basic setting on (universal) quantum groups and (universal) $\imath$quantum groups. Let $Q$ be a quiver (without loops) with vertex set $Q_0= \I$.
Let $n_{ij}$ be the number of edges connecting vertices $i$ and $j$. The symmetric generalized Cartan matrix of the underlying graph of $Q$ is denoted by $C=(c_{ij})_{i,j \in \I}$, where $c_{ij}=2\delta_{ij}-n_{ij}.$ Let $\fg$ be the corresponding Kac-Moody Lie algebra of $Q$ and $C$.

Let $\bv$ be an indeterminant. For $r,m \in \N$, denote by
\[
[r]=\frac{\bv^r-\bv^{-r}}{\bv-\bv^{-1}},
\quad
[r]!=\prod_{i=1}^r [i], \quad \qbinom{m}{r} =\frac{[m][m-1]\ldots [m-r+1]}{[r]!}.
\]

The \emph{universal quantum group} $\tU := \tU_\bv(\fg)$ of $\fg$ is defined to be the $\Q(\bv)$-algebra generated by $E_i,F_i, \tK_i,\tK_i'$, $i\in \I$, where $\tK_i, \tK_i'$ are invertible, subject to the following relations for $i, j \in \I$:
\begin{align}
	[E_i,F_j]= \delta_{ij} \frac{\tK_i-\tK_i'}{\bv-\bv^{-1}},  &\qquad [\tK_i,\tK_j]=[\tK_i,\tK_j']  =[\tK_i',\tK_j']=0,
	\label{eq:KK}
	\\
	\tK_i E_j=\bv^{c_{ij}} E_j \tK_i, & \qquad \tK_i F_j=\bv^{-c_{ij}} F_j \tK_i,
	\label{eq:EK}
	\\
	\tK_i' E_j=\bv^{-c_{ij}} E_j \tK_i', & \qquad \tK_i' F_j=\bv^{c_{ij}} F_j \tK_i',
	\label{eq:K2}
\end{align}
and the quantum Serre relations for $i\neq j \in \I$,
\begin{align}
	& \sum_{r=0}^{1-c_{ij}} (-1)^r  E_i^{(r)} E_j  E_i^{(1-c_{ij}-r)}=0,
	\label{eq:serre1} \\
	& \sum_{r=0}^{1-c_{ij}} (-1)^r   F_i^{(r)} F_j  F_i^{(1-c_{ij}-r)}=0.
	\label{eq:serre2}
\end{align}
Here, $[A, B]=AB-BA$, and for any  $n\ge 1, i\in \I$, \[
E_i^{(n)} =\frac{E_i^n}{[n]!}, \quad F_i^{(n)} =\frac{F_i^n}{[n]!}.
\]
Note that $\tK_i \tK_i'\; (i\in \I)$ are central in $\tU$.

The \emph{universal $\imath$quantum group of split type} $\widetilde{\bU}^\imath$ 
is defined to be the $\Q(v)$-subalgebra of $\tU$ generated by
\[
B_i= F_i +  E_{i} \tK_i',
\qquad \tk_i = \tK_i \tK_{i}', \quad \forall i \in \I.
\]
Let $\tU^{\imath 0}$ be the $\Q(v)$-subalgebra of $\tUi$ generated by central elements $\tk_i$, for $i\in \I$.



{Analogous as for $\tU$, the \emph{quantum group} $\bU:=\bU_\bv(\fg)$ is defined to be the $\Q(v)$-algebra generated by $E_i,F_i, K_i, K_i^{-1}$, $i\in \I$, subject to the  relations modified from \eqref{eq:KK}--\eqref{eq:serre2} with $\tK_i$ and $\tK_i'$ replaced by $K_i$ and $K_i^{-1}$, respectively.}

Let $\bvs=(\vs_i)_{i \in \I}\in  (\Q(\bv)^\times)^{\I}$.
The \emph{$\imath$quantum group of split type} $\Ui:=\Ui_{\bvs}$ is the $\Q(v)$-subalgebra of $\bU$ generated by
\[
B_i= F_i+\vs_i E_{i}K_i^{-1},
\qquad  \forall i \in \I.
\]

In order to give a realization of the $\imath$quantum groups via $\Delta$-Hall algebra approach, we consider the nilpotent representation category $\mathcal{A}={\rm rep^{nil}}(\mathbf{k} Q)$. Let $S_i\;(i\in \I)$ be the set of simple $\mathbf{k} Q$-modules.
 We denote by $\mathcal{DH}_1(\mathbf{k} Q):=\mathcal{DH}_1(\mathcal{A})$, $\mathcal{H}_{\Delta}(\mathbf{k} Q):=\mathcal{H}_{\Delta}(\mathcal{A})$ and $\Xi_Q:=\Xi_{\mathcal{A}}$.




\begin{theorem}
\label{thm3.4}
Let $Q$ be an arbitrary quiver without loops. Then there exists a $\mathbb{Q}(\sqq)$-algebra embedding
\begin{align*}
		\Theta_Q: \mathbf{U}^\imath_{|v= \sqq} &\longrightarrow \mathcal{H}_{\Delta}(\mathbf{k} Q),
	\end{align*}
	which sends
	\begin{align*}
		B_i \mapsto -\sqq^{-1}\frac{[S_{i}]}{a_{S_{i}}},
		 \qquad\text{ for }i \in \I.
	\end{align*}
In particular, if $Q$ is a Dynkin quiver, the embedding $\Theta_Q$ is precisely an isomorphism.
\end{theorem}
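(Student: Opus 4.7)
The plan is to construct $\Theta_Q$ as the composition
\[
\Theta_Q \;:=\; \Xi_Q \circ \widetilde{\Theta}_Q,
\]
where $\widetilde{\Theta}_Q : \mathbf{U}^\imath_{|v=\sqq} \hookrightarrow \mathcal{DH}_1(\mathbf{k} Q)$ is the embedding from \cite[Thm 4.4]{CLR} (indicated in the commutative diagram of the introduction) and $\Xi_Q : \mathcal{DH}_1(\mathbf{k} Q) \to \mathcal{H}_\Delta(\mathbf{k} Q)$ is the isomorphism furnished by Proposition \ref{prop3.2}. Since $\widetilde{\Theta}_Q$ is injective and $\Xi_Q$ is an isomorphism, the composite $\Theta_Q$ will automatically be an embedding of $\mathbb{Q}(\sqq)$-algebras, so the main content is a bookkeeping verification that the generators are sent to the right elements.

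First, I would recall from \cite[Thm 4.4]{CLR} the explicit formula giving the image of each generator $B_i$ under $\widetilde{\Theta}_Q$, which is of the form $B_i \mapsto -\sqq^{-1} u_{[S_i]}$ (up to the normalization convention used there). Then I would apply $\Xi_Q$ to this image. By the definition of $\Xi_Q$ in Proposition \ref{prop3.2}, we have $\Xi_Q(u_{[S_i]}) = \frac{1}{a_{S_i}}\cdot [S_i]$, so the composition acts by
\[
B_i \;\longmapsto\; -\sqq^{-1} u_{[S_i]} \;\longmapsto\; -\sqq^{-1}\cdot \frac{[S_i]}{a_{S_i}},
\]
which is exactly the formula stated in the theorem. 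The defining relations of $\mathbf{U}^\imath_{|v=\sqq}$ are already satisfied in $\mathcal{DH}_1(\mathbf{k} Q)$ by \cite[Thm 4.4]{CLR}, and they transfer to $\mathcal{H}_\Delta(\mathbf{k} Q)$ because $\Xi_Q$ is an algebra isomorphism; hence no further relation-checking is required.

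For the Dynkin case, I would invoke the second half of \cite[Thm 4.4]{CLR}, which asserts that $\widetilde{\Theta}_Q$ is an isomorphism onto $\mathcal{DH}_1(\mathbf{k} Q)$ whenever $Q$ is Dynkin (equivalently, $\mathcal{DH}_1(\mathbf{k} Q)$ is generated by the classes $u_{[S_i]}$). Composing with the isomorphism $\Xi_Q$ preserves surjectivity, so $\Theta_Q$ is an isomorphism in the Dynkin case.

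I do not expect any serious obstacle in this proof: the heavy lifting has been done in Proposition \ref{prop3.2} (which identifies the $\Delta$-Hall algebra with the 1-periodic derived Hall algebra) and in \cite[Thm 4.4]{CLR} (which realizes $\mathbf{U}^\imath_{|v=\sqq}$ inside $\mathcal{DH}_1(\mathbf{k} Q)$). The only subtle point is to make sure the constant $-\sqq^{-1}$ matches the normalization in \cite{CLR}; this is a one-line check comparing the image of $B_i$ under $\widetilde{\Theta}_Q$ with the formula $-\sqq^{-1}[S_i]/a_{S_i}$ after applying $\Xi_Q$, and requires no new computation.
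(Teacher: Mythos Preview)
Your proposal is correct and follows exactly the same approach as the paper: define $\Theta_Q$ as the composite of the embedding $\Psi_Q:\mathbf{U}^\imath_{|v=\sqq}\hookrightarrow \mathcal{DH}_1(\mathbf{k} Q)$ from \cite[Thm~4.4]{CLR} (your $\widetilde{\Theta}_Q$) with the isomorphism $\Xi_Q$ of Proposition~\ref{prop3.2}, and read off the image of $B_i$. The only cosmetic discrepancy is notational---the paper reserves the symbol $\widetilde{\Theta}_Q$ for the map in Theorem~\ref{thm4.11} and calls the \cite{CLR} embedding $\Psi_Q$---but the argument is identical.
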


\begin{proof} Recall from \cite[{Theorem 4.4}]{CLR} that there exists a $\mathbb{Q}(\sqq)$-algebra embedding
	\begin{align*}
		\Psi_Q: \mathbf{U}^\imath_{|v= \sqq} &\longrightarrow \mathcal{DH}_1(\mathbf{k} Q),
	\end{align*}
	which sends
	\begin{align*}
		B_i \mapsto -\sqq^{-1}u_{[S_{i}]},
		 \qquad\text{ for }i \in \I.
	\end{align*}

Let $\Theta_Q$ be the following composition $$\Theta_Q=\Xi_Q\circ \Psi_Q:\mathbf{U}^\imath_{|v= \sqq} \longrightarrow \mathcal{DH}_1(\mathbf{k} Q)\longrightarrow \mathcal{H}_{\Delta}(\mathbf{k} Q).$$
Then the result follows from Proposition \ref{prop3.2} immediately.
\end{proof}

\section{Extended $\Delta$-Hall algebra and universal $\imath$quantum group}

In this section, we define the extended $\Delta$-Hall algebra of $\ca$, and prove that it is isomorphic to the $\imath$Hall algebra of $\mathcal{A}$, hence provide a new realization of the universal $\imath$quantum group associated to $\ca$.

\subsection{Extended $\Delta$-Hall algebras}

Let $K_0(\mathcal{A})$ be the Grothendieck group of $\mathcal{A}$. It is well-known that the Ringel-Hall algebra $\mathcal{H}(\mathcal{A})$ has an extended version via tensoring with the group algebra of $K_0(\mathcal{A})$.
Similarly, the $\Delta$-Hall algebras also admit extended versions, which can be used to realize the universal $\imath$quantum groups.

For technical reason we assume $K_0(\ca)$ is a free abelian group. Both the category of finite-dimensional representations of a quiver, and the category of coherent sheaves over a weighted projective curve, satisfy this assumption.

For any $M\in\ca$, the image of $M$ in $K_0(\ca)$ is denoted by $\widehat{M}$.
The symmetric Euler form in $\ca$ (or in $K_0(\ca)$) is determined by
$(M,N)=\langle M,N\rangle+\langle N,M\rangle$ for any $M,N\in\ca$.
We have the following observation.

{\begin{lemma}
\label{lem4.5}
For any three-cycle in $\ca$ as follows:
\begin{equation}\label{three cycle}
\begin{tikzpicture}
	\node (1) at (0,0) {$N$};
	\node (2) at (1,0) {$M$};
	\node (3) at (2,0){$L$,};
	\node (6) at (0.5,0.9){$A$};
	\node (5) at (1,1.8){$I$};
	\node (4) at (1.5,.9){$B$};
	\draw[->] (2) --node[above ]{} (1);
	\draw[->] (3) --node[below ]{} (2);
	\draw[->] (4) --node[above]{} (3);
	\draw[->] (5) --node[below ]{} (4);
	\draw[->] (6) --node[below ]{} (5);
	\draw[->] (1) --node[below ]{} (6);
	\end{tikzpicture}
\end{equation}
we have
\begin{itemize}
  \item[(i)] $2\widehat{L}=\widehat{M}+\widehat{B}-\widehat{A}$;
  \item[(ii)] $2\widehat{I}=\widehat{A}+\widehat{B}-\widehat{M}$;
  \item[(iii)] $2\widehat{N}=\widehat{M}+\widehat{A}-\widehat{B}$.
\end{itemize}
Consequently,
\begin{equation}\label{bilinear form relations}
2(N,L)=\langle M,M\rangle-\langle A,A\rangle-\langle B,B\rangle+(A,B).
\end{equation}
\end{lemma}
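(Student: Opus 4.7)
The plan is to read off three relations in $K_0(\ca)$ from the three short exact sequences that make up the cycle \eqref{three cycle}, and then simply invert this linear system to obtain (i)--(iii); the final identity \eqref{bilinear form relations} will follow by a bilinear expansion using the fact that $\langle \cdot,\cdot\rangle$ and $(\cdot,\cdot)$ descend to $K_0(\ca)$.

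First, I would observe that ``each two adjacent arrows on the same line form a short exact sequence'' translates, exactly as in the definition of $\widehat{F}^M_{AB}$ via $F^M_{NL}\,F^B_{LI}\,F^A_{IN}$, into the three short exact sequences
\[
0\to L\to M\to N\to 0,\quad 0\to I\to B\to L\to 0,\quad 0\to N\to A\to I\to 0,
\]
which give $\widehat M=\widehat L+\widehat N$, $\widehat B=\widehat I+\widehat L$, $\widehat A=\widehat N+\widehat I$ in $K_0(\ca)$. Taking the three pairwise linear combinations $\widehat M+\widehat B-\widehat A$, $\widehat A+\widehat B-\widehat M$, and $\widehat M+\widehat A-\widehat B$ yields (i), (ii), (iii) instantly.

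For the bilinear identity, I would use the fact that the Euler form $\langle\cdot,\cdot\rangle$ and its symmetrization $(\cdot,\cdot)$ are additive on short exact sequences. Expanding
\[
\langle M,M\rangle=\langle L+N,L+N\rangle=\langle L,L\rangle+\langle N,N\rangle+(N,L),
\]
and similarly
\[
\langle A,A\rangle=\langle N,N\rangle+\langle I,I\rangle+(N,I),\qquad \langle B,B\rangle=\langle I,I\rangle+\langle L,L\rangle+(I,L),
\]
together with
\[
(A,B)=(N+I,I+L)=(N,I)+(N,L)+2\langle I,I\rangle+(I,L),
\]
I would substitute into the right-hand side of \eqref{bilinear form relations}; the terms $\langle L,L\rangle$, $\langle N,N\rangle$, $2\langle I,I\rangle$, $(N,I)$, $(I,L)$ cancel in pairs, leaving exactly $2(N,L)$.

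No real obstacle is expected here: this is a formal consequence of additivity on $K_0(\ca)$ and bilinearity of $\langle\cdot,\cdot\rangle$. The only mildly delicate point is keeping track of signs and making sure the three short exact sequences are read off the diagram in a consistent direction (so that the ``middle terms'' $M$, $B$, $A$ appear on the correct side of the $K_0$-identities); once this is fixed, (i)--(iii) and \eqref{bilinear form relations} reduce to a routine calculation that can be written in a few lines.
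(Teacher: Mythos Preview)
Your proof is correct. For parts (i)--(iii) you and the paper do essentially the same thing: the paper phrases it via the concatenated five-term exact sequence $0\to N\to A\to B\to M\to N\to 0$, while you read off the three short exact sequences separately and take linear combinations; these are equivalent.

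For \eqref{bilinear form relations} the routes diverge slightly. The paper substitutes $2\widehat N$ and $2\widehat L$ from (i) and (iii) into $(2\widehat N,2\widehat L)$, obtaining $4(N,L)$ in terms of $M,A,B$, and then divides by $2$ (invoking the standing assumption that $K_0(\ca)$ is free). You go in the opposite direction, expanding $\langle M,M\rangle$, $\langle A,A\rangle$, $\langle B,B\rangle$, $(A,B)$ in terms of $L,I,N$ and cancelling. Your route is marginally cleaner in that it never produces a factor of $4$ to be halved, so the freeness hypothesis on $K_0(\ca)$ is not invoked at this step; otherwise the two arguments are of the same nature and length.
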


\begin{proof}
The three-cycle \eqref{three cycle} indicates that we have the following exact sequence:
$$\xymatrix{0\ar[r]&N\ar[r]&A\ar[r]&B\ar[r]&M\ar[r]&N\ar[r]&0.}$$
Hence, $2\widehat{N}=\widehat{M}+\widehat{A}-\widehat{B}$, which proves (iii). Similarly, we can obtain (i) and (ii).

Observe that
\begin{align*}
4(N, L)=&( 2\widehat{N}, 2\widehat{L})\\
=&(\widehat{M}+\widehat{A}-\widehat{B}, \widehat{M}+\widehat{B}-\widehat{A})\\
=&
(\widehat{M},\widehat{M})-(\widehat{A}-\widehat{B},\widehat{A}-\widehat{B})\\
=&(\widehat{M},\widehat{M}) -(\widehat{A},\widehat{A})
 -( \widehat{B}, \widehat{B})
+2(\widehat{A},\widehat{B}).
\end{align*}
By assumption, $K_0(\ca)$ is a free abelian group. Then \eqref{bilinear form relations} follows immediately.
\end{proof}}

Denote by $\widetilde{\mathcal{H}}_{\Delta}(\mathcal{A})$ the $\mathbb{Q}(\sqq)$-vector space with the basis
$$
\big\{[M] [K_\alpha]\mid [M]\in{\rm Iso}(\mathcal{A}), {\alpha\in K_0(\mathcal{A})}\big\}.
$$

\begin{proposition}\label{extended version}
The $\mathbb{Q}(\sqq)$-vector space $\widetilde{\mathcal{H}}_{\Delta}(\mathcal{A})$ endowed with the multiplication defined by
\[[A] [K_\alpha]*[B][K_\beta]=\sum_{[M]}\widehat{F}_{AB}^M\cdot[M]
[K_{\frac{\widehat{A}+\widehat{B}-\widehat{M}}{2}+\alpha+\beta}],\]
forms an associative algebra with unit $[0][K_0]$, called the
\emph{extended $\Delta$-Hall algebra} of $\ca$.
\end{proposition}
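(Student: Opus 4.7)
The plan is to reduce associativity on $\widetilde{\mathcal{H}}_{\Delta}(\mathcal{A})$ to Proposition \ref{lem2.4} together with a bookkeeping check that the $[K_\ast]$-exponents telescope correctly when iterating the product.

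First I would verify that the formula is well-posed: the subscript $\tfrac{\widehat{A}+\widehat{B}-\widehat{M}}{2}$ must lie in $K_0(\ca)$ whenever $\widehat{F}_{AB}^M\neq 0$. But any non-zero contribution to \eqref{F hat} requires the existence of at least one three-cycle as in \eqref{three cycle} with outer labels $A,B,M$, and for any such three-cycle Lemma \ref{lem4.5}(ii) yields $\widehat{A}+\widehat{B}-\widehat{M}=2\widehat{I}$. Since $K_0(\ca)$ is free abelian by assumption, $\widehat{I}$ is uniquely recovered from $2\widehat{I}$, so the product is unambiguous.

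Next, I would expand both bracketings of a triple product. Writing $\Sigma:=\tfrac{\widehat{A}+\widehat{B}+\widehat{C}-\widehat{P}}{2}+\alpha+\beta+\gamma$, the left-hand side becomes
\[
([A][K_\alpha]*[B][K_\beta])*[C][K_\gamma]=\sum_{[P]}\Big(\sum_{[M]}\widehat{F}_{AB}^M\widehat{F}_{MC}^P\Big)\,[P][K_{\Sigma}],
\]
because the two intermediate $K$-exponents $\tfrac{\widehat{A}+\widehat{B}-\widehat{M}}{2}$ and $\tfrac{\widehat{M}+\widehat{C}-\widehat{P}}{2}$ combine additively with the $\widehat{M}$-contributions cancelling. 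An identical computation gives $[A][K_\alpha]*([B][K_\beta]*[C][K_\gamma])=\sum_{[P]}\big(\sum_{[N]}\widehat{F}_{AN}^P\widehat{F}_{BC}^N\big)\,[P][K_{\Sigma}]$ with the same $K$-subscript. Matching coefficients of each basis vector $[P][K_\ast]$, associativity reduces precisely to the identity $\sum_{[X]}\widehat{F}_{AB}^X\widehat{F}_{XC}^P=\sum_{[Y]}\widehat{F}_{AY}^P\widehat{F}_{BC}^Y$ already established in Proposition \ref{lem2.4}.

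Finally I would check the unit axiom. Setting $A=0$ in \eqref{F hat}, the factor $F_{IN}^0$ forces $I=N=0$, after which $F_{L0}^B$ and $F_{0L}^M$ both reduce to Kronecker deltas that force $[L]=[B]=[M]$, yielding $\widehat{F}_{0B}^M=\delta_{[B],[M]}$; moreover, on the support $[B]=[M]$ the $K$-subscript $\tfrac{\widehat{B}-\widehat{M}}{2}$ vanishes. A symmetric computation gives $\widehat{F}_{A0}^M=\delta_{[A],[M]}$ with corresponding $K$-subscript $0$. Substituting these into the multiplication rule yields $[0][K_0]*[B][K_\beta]=[B][K_\beta]$ and $[A][K_\alpha]*[0][K_0]=[A][K_\alpha]$. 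There is no serious obstacle in the whole argument; the only point requiring care is the compatibility of the $K$-grading across iterated products, which is exactly the telescoping observation above and rests essentially on the Grothendieck-group identities recorded in Lemma \ref{lem4.5}.
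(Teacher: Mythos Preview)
Your proposal is correct and follows essentially the same approach as the paper: both verify well-definedness of the $K$-subscript via Lemma~\ref{lem4.5}(ii), expand the two bracketings of a triple product to observe the telescoping of the $K$-exponents into $\tfrac{\widehat{A}+\widehat{B}+\widehat{C}-\widehat{M}}{2}+\alpha+\beta+\gamma$, and then invoke Proposition~\ref{lem2.4}. Your treatment of the unit axiom is a bit more explicit than the paper's, but otherwise the arguments coincide.
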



\begin{proof}
Recall that $$\widehat{F}^M_{AB}=\sum\limits_{[L],[I],[N]} \sqq^{\langle L,I,N\rangle}\cdot \dfrac{a_L a_I a_N }{a_M}\cdot F_{LI}^BF_{NL}^MF_{IN}^A.$$ If $\widehat{F}^M_{AB}\neq 0$, then all of $F_{LI}^B, F_{NL}^M, F_{IN}^A$ are non-zero, which follows that $\widehat{A}+\widehat{B}-\widehat{M}=2\widehat{I}$ by Lemma \ref{lem4.5}. Hence $$\frac{\widehat{A}+\widehat{B}-\widehat{M}}{2}\in K_0(\ca).$$ So the multiplication is well-defined.

For any $[A], [B], [C]\in {\rm Iso}(\mathcal{A})$ and $\alpha, \beta, \gamma \in K_0(\mathcal{A})$, we have
\begin{align*}
    &([A] [K_\alpha]*[B] [K_\beta])*[C] [K_\gamma]\\&=
    \sum_{[X]}\widehat{F}_{AB}^X\cdot [X] [K_{\frac{\widehat{A}+\widehat{B}-\widehat{X}}{2}+\alpha+\beta}]*[C] [K_\gamma]\\&=
    \sum_{[X]}\widehat{F}_{AB}^X\cdot\sum_{[M]}\widehat{F}_{XC}^M \cdot[M] [K_{\frac{\widehat{X}+\widehat{C}-\widehat{M}}{2}+\frac{\widehat{A}+\widehat{B}-\widehat{X}}{2}+\alpha+\beta+\gamma }]\\&=
    \sum_{[M]}\sum_{[X]}\widehat{F}_{AB}^X\widehat{F}_{XC}^M \cdot[M] [K_{\frac{\widehat{A}+\widehat{B}+\widehat{C}-\widehat{M}}{2}+\alpha+\beta+\gamma }],
\end{align*}
and
\begin{align*}
    &[A] [K_\alpha]*([B] [K_\beta]*[C] [K_\gamma])\\&=
    [A] [K_\alpha]*(\sum_{[Y]}\widehat{F}_{BC}^Y\cdot [Y] [K_{\frac{\widehat{B}+\widehat{C}-\widehat{Y}}{2}+\beta+\gamma}])\\&=
    \sum_{[Y]}\widehat{F}_{BC}^Y\cdot [A] [K_\alpha]*[Y] [K_{\frac{\widehat{B}+\widehat{C}-\widehat{Y}}{2}+\beta+\gamma}]\\&=
    \sum_{[M]}\sum_{[Y]}\widehat{F}_{AY}^M\widehat{F}_{BC}^Y \cdot[M] [K_{\frac{\widehat{A}+\widehat{B}+\widehat{C}-\widehat{M}}{2}+\alpha+\beta+\gamma }].
    \end{align*}
    Now it follows from Lemma \ref{lem2.4} that
    \[
    ([A] [K_\alpha]*[B] [K_\beta])*[C] [K_\gamma]=[A] [K_\alpha]*([B] [K_\beta]*[C] [K_\gamma]).
    \]
    Then we are done.
\end{proof}

\subsection{Extended $\Delta$-Hall algebras and $\imath$Hall algebras}
In this subsection we show that the extend $\Delta$-Hall algebra $\widetilde{\mathcal{H}}_{\Delta}(\mathcal{A})$ is isomorphic to the $\imath$Hall algebra of $\ca$.
First let us recall the construction of the $\imath$Hall algebra due to \cite{LP16, LW19a, LR21}.

Denote by $\mathcal{H}(\cc_1(\ca))$ the Ringel-Hall algebra for the category $\cc_1(\ca)$ of $1$-periodic complexes over $\ca$.
Consider the ideal $\mathcal{I}$ of $\mathcal{H}(\mathcal{C}_1(\mathcal{A}))$ generated by
\begin{align*}
&\Big\{ [M^\bullet]-[N^\bullet]\mid H(M^\bullet)\cong H(N^\bullet), \quad \widehat{\Im d_{M^\bullet}}=\widehat{\Im d_{N^\bullet}} \Big\}.
\end{align*}
Denote by
\[
\cs:=\big\{ a[K^\bullet] \in \ch(\cc_1(\ca))/\cI \mid a\in \Q(\sqq)^\times, K^\bullet\in \cc_1(\ca) \text{ acyclic}\big\},
\]
a multiplicatively closed subset of $\ch(\cc_1(\ca))/ \cI$ with the identity $[0]$, which is right Ore and right reversible.
Hence there exists the right localization of
$\ch(\cc_1(\ca))/\cI$ with respect to $\cs$, called the {\em semi-derived Hall algebra} of $\cc_1(\ca)$ in the sense of \cite{LP16, LW19a}, and will be denoted by $\cs\cd\ch(\cc_1(\ca))$.

The {\em $\imath$Hall algebra} $\iH(\ca)$ of $\ca$ is defined to be the twisted semi-derived Hall algebra $\cs\cd\ch(\cc_1(\ca))$ via the restriction functor $\res: \cc_1(\ca)\rightarrow\ca$ {by forgetting differentials}. That is, $\iH(\ca)$ is the $\Q(\sqq)$-algebra on the same vector space as $\cs\cd\ch(\cc_1(\ca))$, equipped with the following modified multiplication
\begin{align}
   \label{eq:tH}
[M^\bullet]* [N^\bullet] =\sqq^{\langle \res(M^\bullet),\res(N^\bullet)\rangle} [M^\bullet]\diamond[N^\bullet];\quad \forall M^\bullet, N^\bullet\in\cc_1(\ca).
\end{align}


By Theorem 4.5 and Proposition 4.10 in \cite{LW19a}, the $\imath$Hall algebra $\iH(\ca)$ has a basis
$$
\big\{[M]*[K_\alpha]\mid [M]\in{\rm Iso}(\mathcal{A}), {\alpha\in K_0(\mathcal{A})}\big\},
$$
and the elements $[K_\alpha]\;(\alpha\in K_0(\mathcal{A}))$ are central in $\iH(\ca)$. {In order to give an explicit description of the multiplication formula for the basis elements in $\iH(\ca)$, we introduce the following notation
for any $[A], [B]\in {\rm Iso}(\mathcal{A})$:
\begin{align}\label{F tilde}\widetilde{F}_{AB}^M=\sum\limits_{[L],[I],[N]} \sqq^{-\langle A,B\rangle} q^{\langle N,L\rangle}\cdot \frac{a_L a_I a_N }{a_M}\cdot F_{LI}^BF_{NL}^MF_{IN}^A.
\end{align}}


{\begin{lemma}\label{tilde and hat} For any $A,B, M\in\ca$,
$$\widetilde{F}_{AB}^M=\sqq^{\langle M,M\rangle-\langle A,A\rangle-\langle B,B\rangle}\cdot\widehat{F}_{AB}^M.$$
\end{lemma}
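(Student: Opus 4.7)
The plan is to compare the two formulas \eqref{F hat} and \eqref{F tilde} term by term. For each fixed triple $[L],[I],[N]$, the two sums differ only in a power of $\sqq$, so it suffices to check, whenever $F_{LI}^B F_{NL}^M F_{IN}^A \neq 0$, the scalar identity
\[
-\langle A,B\rangle + 2\langle N,L\rangle
\;=\; \langle M,M\rangle -\langle A,A\rangle - \langle B,B\rangle + \langle L,I\rangle + \langle I,I\rangle + \langle I,N\rangle - \langle L,N\rangle,
\]
where I have used $q=\sqq^2$ to fold the $q^{\langle N,L\rangle}$ in $\widetilde{F}_{AB}^M$ into a power of $\sqq$, and expanded $\langle L,I,N\rangle$ via \eqref{short hand notation for LIN}.

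The main (and only) step is then to reduce both sides to the same expression. A nonzero term forces the three short exact sequences
\[
0\to L\to B\to I\to 0,\qquad 0\to N\to M\to L\to 0,\qquad 0\to I\to A\to N\to 0
\]
coming from the three-cycle, so in $K_0(\ca)$ one has $\widehat{B}=\widehat{L}+\widehat{I}$, $\widehat{M}=\widehat{N}+\widehat{L}$, and $\widehat{A}=\widehat{I}+\widehat{N}$. By bilinearity of the Euler form, I expand $\langle A,B\rangle$, $\langle A,A\rangle$, $\langle B,B\rangle$, $\langle M,M\rangle$ in terms of $\langle -,-\rangle$ applied to $L,I,N$ and substitute. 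A direct cancellation then shows both sides equal $\langle N,L\rangle - \langle I,I\rangle - \langle N,I\rangle - \langle I,L\rangle$, which is the identity we want.

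There is no serious obstacle: once the three exact sequences give the additivity of classes in $K_0(\ca)$ and hence the bilinear expansions, the verification is purely a book-keeping cancellation. (Alternatively, one could invoke Lemma \ref{lem4.5} directly to rewrite $\langle M,M\rangle-\langle A,A\rangle-\langle B,B\rangle$ in terms of inner products of $N$ and $L$, but using the three short exact sequences is more transparent and avoids the assumption that $K_0(\ca)$ is free.) The conclusion is the claimed equality
\[
\widetilde{F}_{AB}^M=\sqq^{\langle M,M\rangle-\langle A,A\rangle-\langle B,B\rangle}\cdot \widehat{F}_{AB}^M.
\]
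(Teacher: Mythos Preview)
Your proof is correct and follows essentially the same strategy as the paper: compare \eqref{F hat} and \eqref{F tilde} term by term, use the three short exact sequences to rewrite $\widehat{A},\widehat{B},\widehat{M}$ in terms of $\widehat{L},\widehat{I},\widehat{N}$, and reduce to a scalar identity in the Euler form. The only difference is that the paper routes the computation through equation~\eqref{bilinear form relations} of Lemma~\ref{lem4.5} (which uses the standing assumption that $K_0(\ca)$ is free to divide by~$2$), whereas you expand $\langle M,M\rangle$, $\langle A,A\rangle$, $\langle B,B\rangle$, $\langle A,B\rangle$ directly via bilinearity; as you note, this makes the argument independent of the freeness hypothesis, at the cost of a slightly longer bookkeeping check.
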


\begin{proof}
Recall from \eqref{F hat} that
\begin{equation}\label{F hat 2}\widehat{F}^M_{AB}=\sum_{[L],[I],[N]} \sqq^{\langle L,I,N\rangle}\cdot \frac{a_L a_I a_N }{a_M}\cdot F_{LI}^BF_{NL}^MF_{IN}^A.\end{equation}
For $F_{LI}^BF_{NL}^MF_{IN}^A\neq0$, we have $\widehat{A}=\widehat{I}+\widehat{N}$ and $\widehat{B}=\widehat{L}+\widehat{I}$, which implies $$\langle B,A\rangle=\langle L,I\rangle+\langle I,I \rangle+\langle I,N\rangle+\langle L,N\rangle.$$ It follows from \eqref{short hand notation for LIN} that $$\langle B,A\rangle-2\langle L,N\rangle=\langle L,I,N\rangle.$$
Now according to \eqref{bilinear form relations}, we obtain
\begin{align*}
    &\quad{-\langle A,B\rangle} {+2\langle N,L\rangle}\\&=\langle M,M\rangle-\langle A,A\rangle-\langle B,B\rangle+\langle B,A\rangle {-2\langle L,N\rangle}\\&=\langle M,M\rangle{-\langle A,A\rangle-\langle B,B\rangle+\langle L,I,N\rangle}.
\end{align*}
Then by comparing with \eqref{F tilde} and \eqref{F hat 2}, the result follows immediately.
\end{proof}


\begin{proposition}\label{iHall algebra formula}
For any $A,B\in\mathcal{A}$ and $\alpha, \beta \in K_0(\mathcal{A})$, the following equation holds in $\iH(\ca)$:
$$([A]* [K_\alpha])*([B]*[K_\beta])=\sum\limits_{[M]}\widetilde{F}_{AB}^M\cdot[M]*
[K_{\frac{\widehat{A}+\widehat{B}-\widehat{M}}{2}+\alpha+\beta}].$$
\end{proposition}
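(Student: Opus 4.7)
My plan is to reduce to the case $\alpha=\beta=0$ using the centrality of the $[K_\gamma]$'s, then unpack the definition of the $\imath$twist together with the Lu--Wang description of the semi-derived Hall product, and finally recognize the resulting coefficients as $\widetilde{F}_{AB}^M$.

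The first step is a bookkeeping reduction: since $[K_\gamma]$ is central in $\iH(\ca)$ for every $\gamma\in K_0(\ca)$ (by \cite{LW19a}, Prop.~4.10), the left-hand side equals $([A]*[B])*[K_{\alpha+\beta}]$. The $K$-weight in the target also separates additively, so it suffices to prove
\[
[A]*[B]=\sum_{[M]}\widetilde{F}_{AB}^{M}\,[M]*[K_{(\widehat{A}+\widehat{B}-\widehat{M})/2}].
\]
Next, I unfold the $\imath$twist \eqref{eq:tH} to get $[A]*[B]=\sqq^{\langle A,B\rangle}[A]\diamond[B]$ in $\mathcal{SDH}(\cc_1(\ca))$, and expand $[A]\diamond[B]$ via the Ringel--Hall product in $\ch(\cc_1(\ca))$: a sum over isoclasses of $1$-periodic complexes $E^\bullet$ fitting in a short exact sequence $B\hookrightarrow E^\bullet\twoheadrightarrow A$ in $\cc_1(\ca)$, weighted by the usual $|\Ext^1/\Hom|$ factors.

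The heart of the argument is Step 3: I classify each such $E^\bullet$ by its cohomology $M=H(E^\bullet)$, its image $I=\Im(d_{E^\bullet})$, and its kernel--related piece $N$, together with the three short exact sequences in $\ca$
\[
0\to L\to B\to I\to 0,\qquad 0\to I\to A\to N\to 0,\qquad 0\to N\to M\to L\to 0,
\]
i.e.\ precisely a three-cycle of the shape \eqref{three cycle}. Under the quotient by $\cI$ and the localization at acyclic complexes, each such $E^\bullet$ gets identified with $[M]\diamond[K_{\widehat{I}}]$. Counting the number of isoclasses of $E^\bullet$ realizing a given $(L,I,N,M)$-pattern yields the three Ringel--Hall numbers $F_{LI}^{B}F_{IN}^{A}F_{NL}^{M}$, multiplied by the ratio of automorphism groups $\tfrac{a_L a_I a_N}{a_M}$ and by the quantum correction $q^{\langle N,L\rangle}$ that accounts for the space of differentials compatible with a fixed underlying module extension (this is the same bookkeeping carried out for the derived Hall numbers in \cite[Prop.~3.5]{CLR}). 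After restoring the $\imath$twist factor $\sqq^{\langle A,B\rangle}$, the overall coefficient of $[M]*[K_{\widehat{I}}]$ becomes
\[
\sum_{[L],[I],[N]}\sqq^{-\langle A,B\rangle}q^{\langle N,L\rangle}\,\frac{a_L a_I a_N}{a_M}\,F_{LI}^{B}F_{NL}^{M}F_{IN}^{A}=\widetilde{F}_{AB}^{M},
\]
using the definition \eqref{F tilde}. Finally, by Lemma~\ref{lem4.5}(ii) one has $\widehat{I}=\frac{\widehat{A}+\widehat{B}-\widehat{M}}{2}$, so the $K$-weight matches, and reinserting $[K_{\alpha+\beta}]$ yields the claim.

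\textbf{Main obstacle.} The delicate step is the enumeration in the semi-derived setting: translating a pair (extension in $\cc_1(\ca)$, image of differential) into three-cycle data in $\ca$ and tracking the quantum factor $q^{\langle N,L\rangle}$ that emerges when one replaces a $1$-periodic complex by its cohomology plus an acyclic ``$K$-part''. This is where the asymmetric power of $\sqq$ in the $\imath$Hall multiplication (as opposed to the symmetric one in $\mathcal{H}_\Delta$) enters, and matching it with the $\sqq^{-\langle A,B\rangle}$ prefactor in $\widetilde{F}_{AB}^{M}$ versus the $\sqq^{\langle L,I,N\rangle}$ prefactor in $\widehat{F}_{AB}^{M}$ is exactly the content of Lemma~\ref{tilde and hat}. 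All the other manipulations are routine once this identification is in hand.
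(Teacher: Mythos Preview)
Your proposal is correct and follows essentially the same route as the paper: the paper simply cites \cite[Proposition~3.5]{CLR} directly for the expansion of $[A]*[B]$ in $\iH(\ca)$ (rather than sketching its derivation as you do), then uses Lemma~\ref{lem4.5} to rewrite the $K$-index $\widehat{A}-\widehat{N}=\widehat{I}$ as $(\widehat{A}+\widehat{B}-\widehat{M})/2$ and invokes centrality of the $[K_\gamma]$'s. One small slip: the three short exact sequences you display are each reversed relative to the convention for $F_{LI}^B$, $F_{IN}^A$, $F_{NL}^M$ (the second subscript is the subobject), so they should read $0\to I\to B\to L\to 0$, $0\to N\to A\to I\to 0$, $0\to L\to M\to N\to 0$; your final formula is nonetheless correct.
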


\begin{proof}
By \cite[Proposition 3.5]{CLR}, in $\iH(\ca)$ we have
$$[A]*[B]=\sum_{[M]} \sum_{[L],[I],[N]} \sqq^{-\langle A,B\rangle} q^{\langle N,L\rangle}\cdot\frac{ a_L a_Ia_N}{a_M }\cdot F_{LI}^BF_{NL}^MF_{IN}^A		\cdot [M]*[K_{\widehat{A}-\widehat{N}}].$$
In case $F_{LI}^BF_{NL}^MF_{IN}^A\neq0$, there is a three-cycle \eqref{three cycle}. Then by Lemma \ref{lem4.5}, we obtain $\widehat{A}-\widehat{N}=\dfrac{\widehat{A}+\widehat{B}-\widehat{M}}{2}$, which is independent of $[L],[I],[N]$. Hence in $\iH(\ca)$ we have
$$[A]*[B]=\sum\limits_{[M]}\widetilde{F}_{AB}^M\cdot[M]*
[K_{\frac{\widehat{A}+\widehat{B}-\widehat{M}}{2}}].$$
Recall that $[K_{\alpha}]\;(\alpha\in K_0(\mathcal{A}))$ are central in $\iH(\ca)$.
Then we are done.
\end{proof}}


%

Now we can state the main result of this subsection.
\begin{proposition}
\label{prop4.8}
There is an algebra isomorphism
\begin{align*}
\widetilde{\Xi}_\ca:
\iH(\ca)
&\longrightarrow \widetilde{\mathcal{H}}_{\Delta}(\mathcal{A}).
\\
[M]*[K_\alpha]
&\mapsto \sqq^{-\langle M,M\rangle}[M][K_{\alpha}]
\end{align*}
\end{proposition}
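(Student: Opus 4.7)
The plan is to verify directly that the stated map is both a bijection on bases and an algebra homomorphism, using the explicit multiplication formulas already available in both algebras together with the comparison Lemma \ref{tilde and hat}.

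First I would observe that bijectivity is essentially free. By \cite[Prop.~4.10]{LW19a} (quoted above), $\iH(\ca)$ has basis $\{[M]*[K_\alpha]\}$, while by Proposition \ref{extended version} the extended $\Delta$-Hall algebra $\widetilde{\mathcal{H}}_{\Delta}(\mathcal{A})$ has basis $\{[M][K_\alpha]\}$, both indexed by the same set ${\rm Iso}(\ca) \times K_0(\ca)$. Since the map $\widetilde{\Xi}_\ca$ sends each basis element to a nonzero scalar multiple $\sqq^{-\langle M,M\rangle}[M][K_\alpha]$ of a basis element, it is a $\mathbb{Q}(\sqq)$-linear isomorphism of vector spaces.

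Next I would check that $\widetilde{\Xi}_\ca$ respects multiplication. On the source side, Proposition \ref{iHall algebra formula} gives
\[
([A]*[K_\alpha])*([B]*[K_\beta])=\sum_{[M]}\widetilde{F}_{AB}^M\cdot[M]*[K_{\frac{\widehat{A}+\widehat{B}-\widehat{M}}{2}+\alpha+\beta}],
\]
so applying $\widetilde{\Xi}_\ca$ and pulling out $\sqq^{-\langle M,M\rangle}$ on each summand yields, after invoking Lemma \ref{tilde and hat} to replace $\widetilde{F}_{AB}^M$ by $\sqq^{\langle M,M\rangle-\langle A,A\rangle-\langle B,B\rangle}\widehat{F}_{AB}^M$, the expression
\[
\sqq^{-\langle A,A\rangle-\langle B,B\rangle}\sum_{[M]}\widehat{F}_{AB}^M\cdot[M][K_{\frac{\widehat{A}+\widehat{B}-\widehat{M}}{2}+\alpha+\beta}].
\]
On the target side, by the definition of the multiplication in $\widetilde{\mathcal{H}}_{\Delta}(\ca)$ (Proposition \ref{extended version}),
\[
\widetilde{\Xi}_\ca([A]*[K_\alpha])*\widetilde{\Xi}_\ca([B]*[K_\beta])=\sqq^{-\langle A,A\rangle-\langle B,B\rangle}\sum_{[M]}\widehat{F}_{AB}^M\cdot[M][K_{\frac{\widehat{A}+\widehat{B}-\widehat{M}}{2}+\alpha+\beta}].
\]
These two expressions agree, so $\widetilde{\Xi}_\ca$ is multiplicative, hence an algebra homomorphism. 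Finally, $\widetilde{\Xi}_\ca([0]*[K_0]) = [0][K_0]$, so units are preserved.

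The only real content is the exponent bookkeeping: the twist $\sqq^{-\langle M,M\rangle}$ in the definition of $\widetilde{\Xi}_\ca$ is precisely what is needed to absorb the discrepancy $\sqq^{\langle M,M\rangle-\langle A,A\rangle-\langle B,B\rangle}$ between $\widetilde{F}_{AB}^M$ and $\widehat{F}_{AB}^M$ coming from Lemma \ref{tilde and hat}, so no further nontrivial obstacle arises. All the difficulty has already been absorbed into establishing Proposition \ref{iHall algebra formula} and Lemma \ref{tilde and hat}, both of which rely on the Grothendieck-group identities of Lemma \ref{lem4.5}; once those are in hand the isomorphism is essentially a direct comparison of formulas.
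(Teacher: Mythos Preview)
Your proposal is correct and follows essentially the same approach as the paper's own proof: both compute the image of a product via Proposition \ref{iHall algebra formula}, compute the product of the images via the multiplication in $\widetilde{\mathcal{H}}_{\Delta}(\mathcal{A})$, and match them using Lemma \ref{tilde and hat}, with bijectivity coming from the basis correspondence. The only cosmetic difference is the order of presentation (you establish bijectivity first, the paper last) and your explicit mention of the unit.
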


\begin{proof}
For any $A,B\in\mathcal{A}$ and $\alpha, \beta \in K_0(\mathcal{A})$, by Proposition \ref{iHall algebra formula} we have
 \begin{align*}
    &\quad\widetilde{\Xi}_\ca\big(([A]* [K_\alpha])*([B]* [K_\beta])\big)\\&=
   \sum_{[M]}\ {\widetilde F}_{AB}^M	\cdot \widetilde{\Xi}_\ca([M]*[K_{\frac{\widehat{A}+\widehat{B}-\widehat{M}}{2}+\alpha+\beta}])\\&=
   \sum_{[M]}\ \sqq^{-\langle M,M\rangle}	{\widetilde F}_{AB}^M	\cdot [M][K_{\frac{\widehat{A}+\widehat{B}-\widehat{M}}{2}+\alpha+\beta}].
\end{align*}
On the other hand, \begin{align*}
    &\quad\widetilde{\Xi}_\ca([A]* [K_\alpha])*\widetilde{\Xi}_\ca([B]* [K_\beta])\\&=
    (\sqq^{-\langle A,A\rangle}[A][K_{\alpha}])*(\sqq^{-\langle B,B\rangle}[B][K_{\beta}])\\&=
    \sqq^{-\langle A,A\rangle-\langle B,B\rangle}
    {\sum_{[M]}\widehat{F}_{AB}^M\cdot } [M][K_{\frac{\widehat{A}+\widehat{B}-\widehat{M}}{2}+\alpha+\beta}].
\end{align*}
It follows from Lemma \ref{tilde and hat} that $$ \widetilde{\Xi}_\ca([A]* [K_\alpha])*\widetilde{\Xi}_\ca([B]* [K_\beta])=\widetilde{\Xi}_\ca\big(([A]* [K_\alpha])*([B]* [K_\beta])\big).$$ Hence $\widetilde{\Xi}_\ca$ is an algebra homomorphism. By comparing the basis of $\iH(\ca)$ and of $\widetilde{\mathcal{H}}_{\Delta}(\mathcal{A})$, we know that $\widetilde{\Xi}_\ca$ is bijective, so it is an algebra isomorphism.
\end{proof}

{

As a consequence of Propositions \ref{prop4.8} and \ref{prop3.2}, we obtain

\begin{proposition}
\label{cor4.6}
	There exists an algebra epimorphism
\[
\widetilde{\Phi}: \widetilde{\mathcal{H}}_{\Delta}(\ca)\longrightarrow \mathcal{H}_{\Delta}(\mathcal{A}),
\]
\[
 [M][K_\alpha]\longmapsto [M]
\]
with ${\rm Ker}\ \widetilde{\Phi}=\langle [K_\alpha]-1,\alpha\in K_0(\mathcal{A})\rangle$.
\end{proposition}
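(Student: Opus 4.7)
The plan is to verify the statement in three stages: first, well-definedness and epimorphism; second, the easy inclusion $\langle[K_\alpha]-1\rangle\subseteq\Ker\widetilde{\Phi}$; third, the reverse inclusion. I would extend $\widetilde{\Phi}$ linearly from its action on basis vectors and check the homomorphism property by directly comparing the multiplication rules. Applying $\widetilde{\Phi}$ to the identity $[A][K_\alpha]*[B][K_\beta]=\sum_{[M]}\widehat{F}^M_{AB}[M][K_{(\widehat A+\widehat B-\widehat M)/2+\alpha+\beta}]$ from Proposition~\ref{extended version} simply discards every $K$-factor and recovers $\sum_{[M]}\widehat{F}^M_{AB}[M]=[A]*[B]$, which is the defining multiplication of $\mathcal{H}_\Delta(\mathcal{A})$ from Theorem~\ref{thm2.4}. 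Surjectivity is immediate since $\widetilde{\Phi}([M][K_0])=[M]$ already reaches every basis vector.

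The inclusion $\langle[K_\alpha]-1\rangle\subseteq\Ker\widetilde{\Phi}$ is forced by $\widetilde{\Phi}([K_\alpha])=[0]=1$. For the reverse inclusion, the preliminary identity I would establish is
\[
[0][K_\alpha]*[M][K_\beta]=[M][K_{\alpha+\beta}],
\]
proven by a direct inspection of $\widehat{F}^N_{0,M}$: in the defining sum \eqref{F hat}, the factor $F^0_{IN'}$ forces $I=N'=0$, so $L=M$ and $N=M$, yielding $\widehat{F}^M_{0,M}=1$ with all other $\widehat{F}^N_{0,M}$ vanishing, while the $K$-exponent collapses to $\alpha+\beta$. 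Combining the $\beta=0$ specialization with the fact that $[M]=[0][K_0]*[M][K_0]$ then gives
\[
[M][K_\alpha]-[M]=\bigl([K_\alpha]-1\bigr)*[M]\in\langle[K_\alpha]-1\rangle,
\]
so any element of $\widetilde{\mathcal{H}}_\Delta(\mathcal{A})$ is congruent modulo the ideal to a linear combination $\sum c_M[M][K_0]$. If such an element lies in $\Ker\widetilde{\Phi}$, the vanishing $\sum c_M[M]=0$ in $\mathcal{H}_\Delta(\mathcal{A})$ forces every $c_M=0$ by the basis property, placing the original element in the ideal. This yields $\Ker\widetilde{\Phi}=\langle[K_\alpha]-1:\alpha\in K_0(\ca)\rangle$.

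The main obstacle, if any, is the bookkeeping in the identity $[0][K_\alpha]*[M][K_\beta]=[M][K_{\alpha+\beta}]$, specifically checking that the $K$-exponent produced by Proposition~\ref{extended version} collapses correctly to $\alpha+\beta$; everything else is mechanical. As an alternative more in the spirit of the phrase ``as a consequence of Propositions \ref{prop4.8} and \ref{prop3.2}'', one may transport the analogue of \cite[Thm 3.4]{CLR} through the isomorphisms $\widetilde{\Xi}_\ca$ and $\Xi_\ca$, noting that $\widetilde{\Xi}_\ca$ sends $[0]*[K_\alpha]-[0]*[K_0]$ to $[K_\alpha]-1$ on the nose, so the ideal identified there coincides with the one claimed here.
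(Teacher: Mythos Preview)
Your primary argument is correct and fully self-contained: the homomorphism check, surjectivity, and the kernel computation via the identity $[0][K_\alpha]*[M][K_\beta]=[M][K_{\alpha+\beta}]$ all go through exactly as you describe. The only point worth stating a bit more explicitly is that $[K_\alpha]-1$ lies in the center (which follows from the same identity applied on both sides, or from Proposition~\ref{iHall algebra formula} after transport), so that the left ideal you exhibit really is the two-sided ideal generated by these elements; this is implicit in your computation but deserves a word.

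The paper takes the alternative route you sketch at the end: it does not verify anything directly, but defines $\widetilde{\Phi}=\Xi_\ca\circ\Phi\circ\widetilde{\Xi}_\ca^{-1}$ using the epimorphism $\Phi:\iH(\ca)\twoheadrightarrow\mathcal{DH}_1(\ca)$ of \cite[Theorem~3.4]{CLR}, together with the isomorphisms of Propositions~\ref{prop4.8} and~\ref{prop3.2}, and then reads off both the explicit formula $[M][K_\alpha]\mapsto[M]$ and the kernel $\widetilde{\Xi}_\ca(\Ker\Phi)=\langle[K_\alpha]-1\rangle$. Your direct approach has the advantage of being independent of the external reference and of the somewhat heavier machinery behind $\iH(\ca)$ and $\mathcal{DH}_1(\ca)$; the paper's approach is shorter in context and makes the commutative square in the introduction literal. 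Either is a perfectly good proof.
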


\begin{proof}
According to \cite[Theorem 3.4]{CLR}, there exists an algebra epimorphism $$\Phi:\iH(\ca)\longrightarrow\mathcal{DH}_1(\mathcal{A});\quad [M]*[K_\alpha]\mapsto
\sqrt{\{M,M\}}\cdot\tilde{a}_M\cdot u_{[M]},$$
with ${\rm Ker}\ \Phi=\langle [K_\alpha]-1,\alpha\in K_0(\mathcal{A})\rangle$.
By Propositions \ref{prop4.8} and \ref{prop3.2}, there are isomorphisms $$\widetilde{\Xi}_\ca:\iH(\ca)
\to\widetilde{\mathcal{H}}_{\Delta}(\mathcal{A})\text{\quad and\quad} \Xi_\ca:\mathcal{DH}_1(\mathcal{A})
\to \mathcal{H}_{\Delta}(\mathcal{A}).$$ Let $\widetilde{\Phi}$ be the composition $$\widetilde{\Phi}=\Xi_\ca\circ\Phi\circ\widetilde{\Xi}_\ca^{-1}: \widetilde{\mathcal{H}}_{\Delta}(\mathcal{A})
\longrightarrow\iH(\ca)\longrightarrow\mathcal{DH}_1(\mathcal{A})
\rightarrow \mathcal{H}_{\Delta}(\mathcal{A}).$$ Therefore, {$\widetilde{\Phi}([M][K_\alpha])=[M]$} for any $M\in\mathcal{A}$ and $\alpha\in K_0(\mathcal{A})$. In particular, we have the following commutative diagram:
\[
\begin{tikzpicture}
\node (2) at (0,0) {${\rm Ker}\ {\Phi}$};
\node (3) at (0,-3){${\rm Ker}\ \widetilde{\Phi}$};
\node (5) at (5,0){$\iH(\ca)$};
\node (6) at (5,-3) {$\widetilde{\mathcal{H}}_{\Delta}(\ca)$};
\node (8) at (10,0){$\cd\ch_1(\ca)$};
\node (9) at (10,-3) {$\mathcal{H}_{\Delta}(\ca)$.};
\draw[->] (2) --node[right  ]{$\cong$} (3);
\draw[->] (5) --node[right ]{$\cong$} (6);
\draw[->] (8) --node[right ]{$\cong$} (9);
\draw[>->] (2) --node[left]{} (5);
\draw[>->] (3) --node[left]{} (6);
\draw[<<-] (8) --node[below right]{} (5);
\draw[<<-] (9) --node[left]{} (6);
\node (6) at (4.7,-1.5) {$\widetilde{\Xi}_\ca$};
\node (6) at (5+4.7,-1.5) {${\Xi}_\ca$};
\node (6) at (7.5,.3-3) { $\widetilde{\Phi}$\quad };
\node (6) at (7.5,.3) { ${\Phi}$\quad };
\end{tikzpicture}
\]
It follows that ${\rm Ker}\ \widetilde{\Phi}={\widetilde{\Xi}}_\ca({\rm Ker}\ {\Phi})=\langle [K_\alpha]-1,\alpha\in K_0(\mathcal{A})\rangle$. We are done.
\end{proof}}

\subsection{New realization of universal $\imath$quantum groups}

Let $Q$ be an arbitrary quiver without loops. The simple $\mathbf{k} Q$-modules are denoted by $S_i$ for $i\in \I=Q_0$. We consider the hereditary abelian category $\mathcal{A}={\rm rep^{nil}}(\mathbf{k} Q)$.
%
Denote by $\iH(\bfk Q):=\iH(\ca)$, $\widetilde{\mathcal{H}}_{\Delta}(\bfk Q):=\widetilde{\mathcal{H}}_{\Delta}(\ca)$ and  $\widetilde{\Xi}_Q:=\widetilde{\Xi}_{\ca}$. The following result states that we can realize the universal $\imath$quantum groups via extended $\Delta$-Hall algebras.

\begin{theorem}
\label{thm4.11} Let $Q$ be an arbitrary quiver without loops.
	 There exists a $\Q(\sqq)$-algebra embedding
	\begin{align*}
		\widetilde{\Theta}_Q: \tUi_{|v= \sqq} &\longrightarrow \widetilde{\mathcal{H}}_{\Delta}(\bfk Q),
	\end{align*}
	which sends
	\begin{align*}
		B_i \mapsto \frac{-1}{q-1}\cdot\sqq^{-\langle S_i,S_i\rangle}\cdot[S_{i}],
		&\qquad
		\tk_i \mapsto - q^{-1}[K_i], \qquad\text{ for }i \in \I.
	\end{align*}
In particular, if $Q$ is a Dynkin quiver, the embedding $\widetilde{\Theta}_Q$ is precisely an isomorphism.
\end{theorem}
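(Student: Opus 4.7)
The plan is to lift the argument of Theorem \ref{thm3.4} to the extended setting by invoking the known $\imath$Hall algebra realization of the universal $\imath$quantum group and composing it with the isomorphism $\widetilde{\Xi}_Q$ from Proposition \ref{prop4.8}. This is the extended analogue of the factorization $\Theta_Q = \Xi_Q \circ \Psi_Q$ used in the proof of Theorem \ref{thm3.4}.

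First I would recall the main result of \cite[Thm 9.6]{LW20a} (already referenced in the commutative diagram of the introduction): for an arbitrary quiver $Q$ without loops, there is a $\Q(\sqq)$-algebra embedding
\[
\widetilde{\Psi}_Q: \tUi_{|v=\sqq} \longrightarrow \iH(\bfk Q)
\]
sending $B_i \mapsto \tfrac{-1}{q-1}[S_i]$ and $\tk_i \mapsto -q^{-1}[K_{S_i}]$ (where $[K_{S_i}]$ denotes the class of the acyclic complex indexed by $\widehat{S_i}\in K_0(\ca)$), and this embedding is an isomorphism when $Q$ is Dynkin. Then I would define
\[
\widetilde{\Theta}_Q := \widetilde{\Xi}_Q \circ \widetilde{\Psi}_Q : \tUi_{|v=\sqq} \longrightarrow \iH(\bfk Q) \longrightarrow \widetilde{\mathcal{H}}_{\Delta}(\bfk Q).
\]
Since $\widetilde{\Xi}_Q$ is an algebra isomorphism by Proposition \ref{prop4.8}, the composition $\widetilde{\Theta}_Q$ is automatically an algebra embedding, and is an isomorphism precisely when $\widetilde{\Psi}_Q$ is, i.e., when $Q$ is Dynkin.

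It then remains to verify that the composition sends the generators to the expected elements. For the $B_i$ generator, using $[S_i] = [S_i]*[K_0]$ in $\iH(\bfk Q)$ together with the formula $\widetilde{\Xi}_\ca([M]*[K_\alpha]) = \sqq^{-\langle M,M\rangle}[M][K_\alpha]$, we compute
\[
\widetilde{\Theta}_Q(B_i) = \widetilde{\Xi}_Q\Bigl(\tfrac{-1}{q-1}[S_i]*[K_0]\Bigr) = \tfrac{-1}{q-1}\sqq^{-\langle S_i,S_i\rangle}[S_i][K_0],
\]
which (identifying $[K_0]$ with $1$) matches the claimed image. For the $\tk_i$ generator, we have $[K_{S_i}] = [0]*[K_{\widehat{S_i}}]$ in $\iH(\bfk Q)$, and since $\langle 0,0\rangle = 0$, $\widetilde{\Xi}_Q$ sends this to $[K_i]$ (under the identification $K_i = K_{\widehat{S_i}}$). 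Hence
\[
\widetilde{\Theta}_Q(\tk_i) = -q^{-1}[K_i],
\]
as required.

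The whole proof is essentially a bookkeeping exercise once Proposition \ref{prop4.8} and \cite[Thm 9.6]{LW20a} are in hand; no new associativity or Green's-formula calculation is needed. The only mild subtlety is keeping track of the twist factor $\sqq^{-\langle S_i,S_i\rangle}$ introduced by $\widetilde{\Xi}_Q$ (which does not appear in the non-extended Theorem \ref{thm3.4} because there the isomorphism $\Xi_Q$ only rescales by $1/a_M$ with no Euler-form twist), and checking that the $[K_i]$'s map across the isomorphism $\widetilde{\Xi}_Q$ to the correct $[K_\alpha]$'s with $\alpha\in K_0(\ca)$. Neither step presents any real obstacle.
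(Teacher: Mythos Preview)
Your proposal is correct and follows essentially the same approach as the paper: both define $\widetilde{\Theta}_Q$ as the composition $\widetilde{\Xi}_Q \circ \widetilde{\psi}_Q$, invoking the Lu--Wang embedding $\widetilde{\psi}_Q:\tUi_{|v=\sqq}\to\iH(\bfk Q)$ (cited in the paper as \cite{LW19a,LW20a,LS21}) together with Proposition~\ref{prop4.8}. Your explicit verification of where the generators land is a small elaboration that the paper leaves to the reader.
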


\begin{proof}
Recall from \cite{LW19a,LW20a,LS21} that the $\imath$Hall algebra $\iH(\bfk Q)$ of ${\rm rep^{nil}}(\mathbf{k} Q)$ provides a realization of the universal $\imath$quantum group $\tUi$ associated to $Q$. Namely, there is an embedding $$\widetilde{\psi}_Q: \tUi_{|v= \sqq}\longrightarrow \iH(\bfk Q),$$ which sends
	\begin{align*}
		B_i \mapsto \frac{-1}{q-1}[S_{i}],
		&\qquad
		\tk_i \mapsto - q^{-1}[K_i], \qquad\text{ for }i \in \I.
	\end{align*}
In particular, if $Q$ is a Dynkin quiver, the embedding $\widetilde{\psi}_Q$ is precisely an isomorphism.

On the other hand, by Proposition \ref{prop4.8}, there exists an algebra isomorphism $$\widetilde{\Xi}_Q:
\iH(\bfk Q)
\rightarrow \widetilde{\mathcal{H}}_{\Delta}(\bfk Q),$$ which sends\[
[M]*[K_\alpha]
\mapsto \sqq^{-\langle M,M\rangle}[M][K_\alpha];\quad \forall M\in\mathcal{A},\;\alpha\in K_0(\mathcal{A}).
\]
Set $\widetilde{\Theta}_Q=\widetilde{\Xi}_Q\circ \widetilde{\psi}_Q$, then we are done.
\end{proof}

%
%

\section{Twisting on extended $\Delta$-Hall algebras}

In this section, we consider the twisting on the extended $\Delta$-Hall algebra $\widetilde{\mathcal{H}}_{\Delta}(\ca)$, in order to recover the semi-derived Hall algebra $\mathcal{SDH}(\mathcal{C}_1(\ca))$ of $\mathcal{C}_1(\ca)$. Moreover, by extending $K_0(\ca)$ to $K_0(\ca)\otimes_{\mathbb{Z}} \dfrac{1}{2}\mathbb{Z}$, we establish an algebra isomorphism between the extended version of the $\Delta$-Hall algebra and the derived Hall algebra of $\ca$.

\subsection{Twisted extended $\Delta$-Hall algebras and semi-derived Hall algebras}


Let $\varphi:K_0(\ca)\times K_0(\ca)\rightarrow\mathbb{Q}(\sqq)$ be a multiplicative bilinear form on $K_0(\ca)$ in the following sense:
\[
\varphi(\alpha_1+\alpha_2,\beta)=\varphi(\alpha_1,\beta)\cdot\varphi(\alpha_1,\beta)\quad {\text {and} \quad}\varphi(\alpha,\beta_1+\beta_2)=\varphi(\alpha,\beta_1)\cdot\varphi(\alpha,\beta_2)\]
for $\alpha,\alpha_1,\alpha_2,\beta,\beta_1,\beta_2\in K_0(\ca)$.

\begin{proposition}\label{twist}
For any multiplicative bilinear form $\varphi:K_0(\ca)\times K_0(\ca)\rightarrow\mathbb{Q}(\sqq)$, the following multiplication
\[
[A] [K_\alpha]\divideontimes [B][K_\beta]= \varphi(\widehat{A}+2\alpha,\widehat{B}+2\beta)\cdot[A] [K_\alpha]* [B][K_\beta]
\]
defines an associative algebra structure on $\widetilde{\mathcal{H}}_{\Delta}(\ca)$, called the \emph{twisted extended $\Delta$-Hall algebra} of $\ca$ via $\varphi$, and denoted by
${}_{\varphi}\widetilde{\mathcal{H}}_{\Delta}(\ca)$.
\end{proposition}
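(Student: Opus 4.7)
The plan is to reduce associativity of $\divideontimes$ to associativity of $*$ (already proved in Proposition \ref{extended version}) plus a $2$-cocycle type identity for $\varphi$. The key technical observation to set up is that the untwisted multiplication $*$ is graded with respect to the ``total degree'' map
\[
\deg:\widetilde{\mathcal{H}}_{\Delta}(\ca)\longrightarrow K_0(\ca),\qquad \deg([M][K_\alpha])=\widehat{M}+2\alpha.
\]
First I would verify that $*$ respects this grading. Indeed, in the expansion
\[
[A][K_\alpha]*[B][K_\beta]=\sum_{[M]}\widehat{F}_{AB}^{M}\cdot [M][K_{\frac{\widehat{A}+\widehat{B}-\widehat{M}}{2}+\alpha+\beta}],
\]
each nonzero summand has total degree $\widehat{M}+2\cdot\frac{\widehat{A}+\widehat{B}-\widehat{M}}{2}+2\alpha+2\beta=(\widehat{A}+2\alpha)+(\widehat{B}+2\beta)$, which is exactly the sum of the degrees of the two factors. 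In particular, $[M][K_{(\widehat{A}+\widehat{B}-\widehat{M})/2+\alpha+\beta}]$ lies in $K_0(\ca)$ by Lemma \ref{lem4.5}, so the formula is well defined.

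Next I would expand both triple products. On one hand,
\[
([A][K_\alpha]\divideontimes[B][K_\beta])\divideontimes[C][K_\gamma]
\]
equals $\varphi(\widehat{A}+2\alpha,\widehat{B}+2\beta)$ times the twist of $([A][K_\alpha]*[B][K_\beta])\divideontimes[C][K_\gamma]$. Using linearity of the twist and the fact that each summand of $[A][K_\alpha]*[B][K_\beta]$ has total degree $(\widehat{A}+2\alpha)+(\widehat{B}+2\beta)$, the overall scalar in front of each $*$-summand becomes
\[
\varphi(\widehat{A}+2\alpha,\widehat{B}+2\beta)\cdot\varphi\bigl((\widehat{A}+2\alpha)+(\widehat{B}+2\beta),\,\widehat{C}+2\gamma\bigr).
\]
The analogous computation for $[A][K_\alpha]\divideontimes([B][K_\beta]\divideontimes[C][K_\gamma])$ produces the scalar
\[
\varphi(\widehat{B}+2\beta,\widehat{C}+2\gamma)\cdot\varphi\bigl(\widehat{A}+2\alpha,\,(\widehat{B}+2\beta)+(\widehat{C}+2\gamma)\bigr).
\]

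Finally I would invoke the bilinearity of $\varphi$: splitting $\varphi(\alpha_1+\alpha_2,\beta)=\varphi(\alpha_1,\beta)\varphi(\alpha_2,\beta)$ and $\varphi(\alpha,\beta_1+\beta_2)=\varphi(\alpha,\beta_1)\varphi(\alpha,\beta_2)$ shows that both scalar factors equal
\[
\varphi(\widehat{A}+2\alpha,\widehat{B}+2\beta)\cdot\varphi(\widehat{A}+2\alpha,\widehat{C}+2\gamma)\cdot\varphi(\widehat{B}+2\beta,\widehat{C}+2\gamma).
\]
Since the scalars agree and the $*$-products inside are equal by associativity of $*$, the two sides of the associativity relation for $\divideontimes$ coincide. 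The unit $[0][K_0]$ clearly remains a unit because $\varphi(0,-)=\varphi(-,0)=1$ by bilinearity.

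The bookkeeping steps are routine; the only conceptual point that needs care is the grading check, i.e.\ verifying that the exponent of $K$ appearing on the right side of the product formula is exactly $\frac{\widehat{A}+\widehat{B}-\widehat{M}}{2}+\alpha+\beta$, so that the total degree is additive. This is what makes the bilinearity of $\varphi$ translate directly into the $2$-cocycle identity needed for associativity of the twist; without this grading property the argument would fail.
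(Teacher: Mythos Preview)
Your proof is correct and follows essentially the same approach as the paper's own proof: both expand the two bracketings of the triple $\divideontimes$-product, use that every summand of $[A][K_\alpha]*[B][K_\beta]$ has total degree $(\widehat{A}+2\alpha)+(\widehat{B}+2\beta)$ to identify the resulting $\varphi$-factors, split them via multiplicative bilinearity into the same symmetric product of three $\varphi$'s, and then invoke associativity of $*$. The only difference is presentational: you name the grading $\deg([M][K_\alpha])=\widehat{M}+2\alpha$ and frame the argument as a $2$-cocycle check, whereas the paper performs the same computation without isolating this observation.
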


\begin{proof}
For any $[A], [B], [C]\in {\rm Iso}(\mathcal{A})$ and $\alpha, \beta, \gamma \in K_0(\mathcal{A})$, we have
\begin{align*}
&\qquad\big([A] [K_\alpha]\divideontimes [B][K_\beta]\big)\divideontimes [C][K_\gamma]\\&=
\varphi(\widehat{A}+2\alpha,\widehat{B}+2\beta)\cdot\big([A] [K_\alpha]*[B][K_\beta]\big)\divideontimes [C][K_\gamma]\\&=\varphi(\widehat{A}+2\alpha,\widehat{B}+2\beta)
\varphi(\widehat{A}+2\alpha+\widehat{B}+2\beta,\widehat{C}+2\gamma)\cdot\big([A] [K_\alpha]*[B][K_\beta]\big)* [C][K_\gamma]\\&=\varphi(\widehat{A}+2\alpha,\widehat{B}+2\beta)
\varphi(\widehat{A}+2\alpha,\widehat{C}+2\gamma)\varphi(\widehat{B}+2\beta,\widehat{C}+2\gamma)
\cdot\big([A] [K_\alpha]*[B][K_\beta]\big)* [C][K_\gamma].
\end{align*}
On the other hand,
\begin{align*}
&\qquad[A] [K_\alpha]\divideontimes \big([B][K_\beta]\divideontimes [C][K_\gamma]\big)\\&=
\varphi(\widehat{B}+2\beta,\widehat{C}+2\gamma)\cdot[A] [K_\alpha]\divideontimes\big([B][K_\beta]* [C][K_\gamma]\big)\\&=\varphi(\widehat{B}+2\beta,\widehat{C}+2\gamma)\varphi(\widehat{A}+2\alpha,\widehat{B}+2\beta+\widehat{C}+2\gamma)\cdot[A] [K_\alpha]*\big([B][K_\beta]* [C][K_\gamma]\big)\\&=\varphi(\widehat{A}+2\alpha,\widehat{B}+2\beta)\varphi(\widehat{A}+2\alpha,\widehat{C}+2\gamma)\varphi(\widehat{B}+2\beta,\widehat{C}+2\gamma)\cdot[A] [K_\alpha]*\big([B][K_\beta]* [C][K_\gamma])\big.
\end{align*}
Hence, by the associativity of $\widetilde{\mathcal{H}}_{\Delta}(\ca)$ (see Proposition \ref{extended version}) we obtain $$\big([A] [K_\alpha]\divideontimes [B][K_\beta]\big)\divideontimes [C][K_\gamma]=[A] [K_\alpha]\divideontimes \big([B][K_\beta]\divideontimes [C][K_\gamma]\big).$$ Then we are done.
\end{proof}

As an application, we find that the semi-derived Hall algebra $\mathcal{SDH}(\mathcal{C}_1(\ca))$ of $\mathcal{C}_1(\ca)$,
can be obtained from the extended $\Delta$-Hall algebra by suitable twisting.

\begin{proposition} Let $\varphi:K_0(\ca)\times K_0(\ca)\rightarrow\mathbb{Q}(\sqq)$ be a multiplicative bilinear form given by {$\varphi(\alpha,\beta)=\sqq^{-\langle\alpha,\beta\rangle}$}. Then there is an algebra isomorphism:\begin{align*}
_\varphi\widetilde{\Xi}_\ca: \mathcal{SDH}(\mathcal{C}_1(\ca))&\longrightarrow{}_{\varphi}\widetilde{\mathcal{H}}_{\Delta}(\ca).
\\
[M]\diamond[K_\alpha]
&\mapsto \sqq^{-\langle M,M\rangle}[M][K_{\alpha}]
\end{align*}
\end{proposition}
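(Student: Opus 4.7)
The plan is to deduce this isomorphism from the $\imath$Hall algebra isomorphism $\widetilde{\Xi}_\ca$ of Proposition \ref{prop4.8}, by exploiting the fact that the twist relating $\mathcal{SDH}(\mathcal{C}_1(\ca))$ to $\iH(\ca)$ matches exactly the inverse of the twist relating $\widetilde{\mathcal{H}}_\Delta(\ca)$ to ${}_\varphi\widetilde{\mathcal{H}}_\Delta(\ca)$. Indeed, the $\imath$Hall multiplication is defined from the semi-derived one by $[M^\bullet]*[N^\bullet]=\sqq^{\langle\res(M^\bullet),\res(N^\bullet)\rangle}[M^\bullet]\diamond[N^\bullet]$ (see \eqref{eq:tH}); since $\res(K_\alpha)$ has class $2\alpha$ in $K_0(\ca)$ (this is the origin of the factor $2$ appearing in Proposition \ref{twist}), on basis elements this specializes to a twist factor $\sqq^{\langle\widehat{A}+2\alpha,\widehat{B}+2\beta\rangle}$, whose inverse is precisely $\varphi(\widehat{A}+2\alpha,\widehat{B}+2\beta)$.

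First I would verify that ${}_\varphi\widetilde{\Xi}_\ca$ is a linear bijection: both source and target carry bases indexed by $\mathrm{Iso}(\ca)\times K_0(\ca)$, and the prescribed formula sends each basis element to a nonzero scalar multiple of a basis element.

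For multiplicativity on basis elements, I would rewrite the semi-derived product as an $\imath$Hall product via the twist, then apply Proposition \ref{prop4.8}:
\[
{}_\varphi\widetilde{\Xi}_\ca\bigl(([A]\diamond[K_\alpha])\diamond([B]\diamond[K_\beta])\bigr)=\sqq^{-\langle\widehat{A}+2\alpha,\widehat{B}+2\beta\rangle}\,\widetilde{\Xi}_\ca\bigl(([A]*[K_\alpha])*([B]*[K_\beta])\bigr),
\]
which unfolds to $\sqq^{-\langle\widehat{A}+2\alpha,\widehat{B}+2\beta\rangle}\sqq^{-\langle A,A\rangle-\langle B,B\rangle}\bigl([A][K_\alpha]*[B][K_\beta]\bigr)$. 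On the other hand, unwinding the definition of $\divideontimes$ together with the formula ${}_\varphi\widetilde{\Xi}_\ca([X]\diamond[K_\gamma])=\sqq^{-\langle X,X\rangle}[X][K_\gamma]$ gives
\[
{}_\varphi\widetilde{\Xi}_\ca([A]\diamond[K_\alpha])\divideontimes{}_\varphi\widetilde{\Xi}_\ca([B]\diamond[K_\beta])=\varphi(\widehat{A}+2\alpha,\widehat{B}+2\beta)\sqq^{-\langle A,A\rangle-\langle B,B\rangle}\bigl([A][K_\alpha]*[B][K_\beta]\bigr).
\]
Since $\varphi(\widehat{A}+2\alpha,\widehat{B}+2\beta)=\sqq^{-\langle\widehat{A}+2\alpha,\widehat{B}+2\beta\rangle}$, the two sides agree.

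The only mildly subtle point is pinning down the $K_0(\ca)$-class of $\res(K_\alpha)$ as $2\alpha$ (which is encoded in the factor $2$ already appearing in Proposition \ref{twist}). Once this convention is acknowledged, the argument is a clean bookkeeping of the two twist factors---both governed by the Euler form on $K_0(\ca)$---and the result follows directly from the algebra isomorphism $\widetilde{\Xi}_\ca$ of Proposition \ref{prop4.8}.
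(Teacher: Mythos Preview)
Your argument is correct and lands on the same computation as the paper, but packaged slightly differently. The paper writes down the explicit multiplication formula in $\mathcal{SDH}(\mathcal{C}_1(\ca))$ (obtained from \eqref{eq:tH} and Proposition~\ref{iHall algebra formula}), writes down the formula for $\divideontimes$ from Proposition~\ref{twist}, and then compares the two directly using Lemma~\ref{tilde and hat} (the identity $\widetilde{F}_{AB}^M=\sqq^{\langle M,M\rangle-\langle A,A\rangle-\langle B,B\rangle}\widehat{F}_{AB}^M$). You instead route the comparison through the already-established isomorphism $\widetilde{\Xi}_\ca$ of Proposition~\ref{prop4.8}, which hides Lemma~\ref{tilde and hat} inside that earlier result. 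Both approaches rest on the same ingredients; yours is a shade more conceptual, the paper's a shade more explicit.

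One point deserves care in your write-up. Your first displayed identity has ${}_\varphi\widetilde{\Xi}_\ca$ on the left and $\widetilde{\Xi}_\ca$ on the right. These are \emph{not} the same linear map on the common underlying vector space: since $[M]*[K_\gamma]=\sqq^{2\langle M,\gamma\rangle}[M]\diamond[K_\gamma]$, the two maps differ on any element by a nontrivial scalar. What makes your identity true is precisely that the structure constants of the $\diamond$-product in the $\diamond$-basis and of the $*$-product in the $*$-basis differ by the uniform factor $\sqq^{-\langle\widehat{A}+2\alpha,\widehat{B}+2\beta\rangle}$; this is exactly the explicit $\mathcal{SDH}$ multiplication formula the paper records (from \eqref{eq:tH} and Proposition~\ref{iHall algebra formula}), not a one-line consequence of ``the twist'' alone. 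If you make that dependence explicit, your argument is complete.
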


\begin{proof}

{Recall that the basis of $\mathcal{SDH}(\mathcal{C}_1(\ca))$ is given by
$$
\big\{[M]\diamond [K_\alpha]\mid [M]\in{\rm Iso}(\mathcal{A}), {\alpha\in K_0(\mathcal{A})}\big\},
$$
and according to (\ref{eq:tH}) and Proposition \ref{iHall algebra formula}, its multiplication is given by \[
([A]\diamond [K_\alpha])\diamond ([B]\diamond[K_\beta])= \sqq^{-\langle\widehat{A}+2\alpha,\widehat{B}+2\beta\rangle}\sum_{[M]}\widetilde{F}_{AB}^M\cdot[M]\diamond
[K_{\frac{\widehat{A}+\widehat{B}-\widehat{M}}{2}+\alpha+\beta}].
\]}

On the other hand, the basis of ${}_{\varphi}\widetilde{\mathcal{H}}_{\Delta}(\ca)$ is given by
$$
\big\{[M][K_\alpha]\mid [M]\in{\rm Iso}(\mathcal{A}), {\alpha\in K_0(\mathcal{A})}\big\},
$$
and according to Proposition \ref{twist}, its multiplication is given by \[
[A] [K_\alpha]\divideontimes [B][K_\beta]= \sqq^{-\langle\widehat{A}+2\alpha,\widehat{B}+2\beta\rangle}\sum_{[M]}\widehat{F}_{AB}^M\cdot[M]
[K_{\frac{\widehat{A}+\widehat{B}-\widehat{M}}{2}+\alpha+\beta}].
\]


Therefore, by Lemma \ref{tilde and hat},
\begin{align*}
&\quad _\varphi\widetilde{\Xi}_\ca\big(([A]\diamond[K_\alpha])\diamond([B]\diamond[K_\beta])\big)\\
=&\sqq^{-\langle\widehat{A}+2\alpha,\widehat{B}+2\beta\rangle-\langle M,M\rangle}\sum_{[M]}\widetilde{F}_{AB}^M\cdot[M]
[K_{\frac{\widehat{A}+\widehat{B}-\widehat{M}}{2}+\alpha+\beta}]
\\=&\sqq^{-\langle\widehat{A}+2\alpha,\widehat{B}+2\beta\rangle-\langle A,A\rangle-\langle B,B\rangle}\sum_{[M]}\widehat{F}_{AB}^M\cdot[M]
[K_{\frac{\widehat{A}+\widehat{B}-\widehat{M}}{2}+\alpha+\beta}]\\
=&\sqq^{-\langle A,A\rangle-\langle B,B\rangle} ([A] [K_\alpha]\divideontimes [B][K_\beta])\\
=& {}_\varphi\widetilde{\Xi}_\ca([A]\diamond[K_\alpha])\divideontimes {}_\varphi\widetilde{\Xi}_\ca([B]\diamond[K_\beta]).
\end{align*}
Hence, $_\varphi\widetilde{\Xi}_\ca$ is an algebra homomorphism. Obviously, it is an isomorphism of $\mathbb{Q}(\sqq)$-vector spaces. Then we are done.
\end{proof}

\subsection{Extended $\Delta$-Hall algebras and extended derived Hall algebras}
In this subsection, we consider a new kind of extension of the $\Delta$-Hall algebra ${\mathcal{H}}_{\Delta}(\mathcal{A})$, with the aim to establish an \emph{algebraic isomorphism} with certain extended version of the derived Hall algebra $\mathcal{DH}_1(\ca)$.

Recall that the Grothendieck group $K_0(\ca)$ is free. Consider the $\mathbb{Z}$-module $$\dfrac{1}{2}K_0(\ca):=K_0(\ca)\otimes_{\mathbb{Z}} \dfrac{1}{2}\mathbb{Z}=\{\frac{\beta}{2}\mid \beta\in K_0(\ca)\}.$$
Clearly, $K_0(\ca)$ is a subgroup of $\dfrac{1}{2}K_0(\ca)$.

Denote by $\widetilde{\widetilde{\mathcal{H}}}_{\Delta}(\mathcal{A})$ the $\mathbb{Q}(\sqq)$-vector space with the basis
$$
\big\{[M] [K_\alpha]\mid [M]\in{\rm Iso}(\mathcal{A}), {\alpha\in \dfrac{1}{2}K_0(\mathcal{A})}\big\}.
$$
Similar as Proposition \ref{extended version}, the following result holds.
\begin{proposition}
The $\mathbb{Q}(\sqq)$-vector space $\widetilde{\widetilde{\mathcal{H}}}_{\Delta}(\mathcal{A})$ endowed with the multiplication defined by
\[[A] [K_\alpha]*[B][K_\beta]=\sum_{[M]}\widehat{F}_{AB}^M\cdot[M]
[K_{\frac{\widehat{A}+\widehat{B}-\widehat{M}}{2}+\alpha+\beta}],\]
forms an associative algebra with unit $[0][K_0]$.
\end{proposition}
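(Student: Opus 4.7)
The plan is to mirror the proof of Proposition \ref{extended version} almost verbatim; the only substantive difference is that enlarging the indexing group for $K$ from $K_0(\ca)$ to $\tfrac{1}{2}K_0(\ca)$ \emph{eliminates} a well-definedness obstruction rather than introducing new ones.

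First I would verify that the multiplication is well-defined on basis elements. The only concern is whether $\frac{\widehat{A}+\widehat{B}-\widehat{M}}{2}$ lies in the allowed coefficient group. In Proposition \ref{extended version} this required invoking Lemma \ref{lem4.5} to force the expression into $K_0(\ca)$ whenever $\widehat{F}_{AB}^M \neq 0$. Here the check is trivial: since $\widehat{A}+\widehat{B}-\widehat{M} \in K_0(\ca)$ unconditionally, halving it lands in $\tfrac{1}{2}K_0(\ca)$ by the very definition of the enlarged group, so no structural lemma is needed.

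Next I would establish associativity by the same two-sided expansion used in Proposition \ref{extended version}. Expanding $\bigl([A][K_\alpha] * [B][K_\beta]\bigr) * [C][K_\gamma]$ and $[A][K_\alpha] * \bigl([B][K_\beta] * [C][K_\gamma]\bigr)$, the $K$-factors collapse on both sides to the common exponent
\[
\tfrac{\widehat{A}+\widehat{B}+\widehat{C}-\widehat{M}}{2} + \alpha+\beta+\gamma \;\in\; \tfrac{1}{2}K_0(\ca),
\]
via the telescoping identity $\tfrac{\widehat{X}+\widehat{C}-\widehat{M}}{2} + \tfrac{\widehat{A}+\widehat{B}-\widehat{X}}{2} = \tfrac{\widehat{A}+\widehat{B}+\widehat{C}-\widehat{M}}{2}$ and its mirror image through $[Y]$, both of which hold in $\tfrac{1}{2}K_0(\ca)$. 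After matching the $K$-factors, what remains is the scalar identity $\sum_{[X]}\widehat{F}_{AB}^X \widehat{F}_{XC}^M = \sum_{[Y]}\widehat{F}_{AY}^M \widehat{F}_{BC}^Y$, which is exactly Proposition \ref{lem2.4}. For the unit, $[0][K_0]$ works on both sides: $\widehat{F}_{0B}^M = \widehat{F}_{A0}^M = \delta_{[B],[M]}$ (respectively $\delta_{[A],[M]}$) by a direct inspection of \eqref{F hat}, and the resulting $K$-exponent $\tfrac{\widehat{0}+\widehat{B}-\widehat{M}}{2}$ vanishes on the diagonal.

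I do not expect any serious obstacle. The only way the argument could fail is if the telescoping displayed above broke when passing from $K_0(\ca)$ to $\tfrac{1}{2}K_0(\ca)$, but it manifestly does not, since $\tfrac{1}{2}K_0(\ca)$ is torsion-free (being $K_0(\ca)$ tensored with $\tfrac{1}{2}\mathbb{Z}$) and contains $K_0(\ca)$ as a subgroup. Hence the proof is essentially a streamlined version of that of Proposition \ref{extended version}, with the appeal to Lemma \ref{lem4.5} simply dropped.
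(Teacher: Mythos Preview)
Your proposal is correct and matches the paper's approach exactly: the paper simply states that the result holds ``similar as Proposition \ref{extended version}'' without giving a separate proof, and your write-up faithfully reproduces that argument while noting (correctly) that the well-definedness step becomes automatic over $\tfrac{1}{2}K_0(\ca)$, so the appeal to Lemma \ref{lem4.5} can be dropped.
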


Consider the tensor product space $\mathcal{H}_{\Delta}(\mathcal{A})\otimes_{\mathbb{Q}} \mathbb{Q}{(\dfrac{1}{2}K_0(\ca))}$, equipped with the standard multiplication, i.e.,
{$(x\otimes u)\ast(y\otimes w)=(x\ast y)\otimes (u\ast w)$}. It turns out that the tensor product algebra is isomorphic to the extended $\Delta$-Hall algebra $\widetilde{\widetilde{\mathcal{H}}}_{\Delta}(\mathcal{A})$.


\begin{proposition}\label{tensor space}
There is an algebra isomorphism
{\begin{align*}
\Phi_{\otimes}: \widetilde{\widetilde{\mathcal{H}}}_{\Delta}(\mathcal{A})&\longrightarrow \mathcal{H}_{\Delta}(\mathcal{A})\otimes_{\mathbb{Q}} \mathbb{Q}(\dfrac{1}{2}K_0(\ca)).
\\
[M][K_\alpha]&\mapsto [M]\otimes [K_{\frac{\widehat{M}}{2}+\alpha}]
\end{align*}}
\end{proposition}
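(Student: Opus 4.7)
The plan is to verify that $\Phi_{\otimes}$ is a well-defined bijection on bases, and then to check compatibility with multiplication by direct computation; the proof is essentially bookkeeping in the $K$-degree, so there is no serious obstacle.

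First, I would note that $\Phi_\otimes$ is well-defined on basis elements since $\frac{\widehat{M}}{2}+\alpha \in \tfrac{1}{2}K_0(\ca)$ whenever $\alpha \in \tfrac{1}{2}K_0(\ca)$, and $\mathcal{H}_\Delta(\ca)\otimes_\Q \Q(\tfrac{1}{2}K_0(\ca))$ has the basis $\{[M]\otimes[K_\gamma] \mid [M]\in\Iso(\ca),\ \gamma\in\tfrac{1}{2}K_0(\ca)\}$. For bijectivity I would observe that, with $[M]$ fixed, the map $\alpha \mapsto \tfrac{\widehat{M}}{2}+\alpha$ is a bijection of $\tfrac{1}{2}K_0(\ca)$ to itself, so $\Phi_\otimes$ is a bijection between basis sets of the same cardinality, hence a $\Q(\sqq)$-linear isomorphism of vector spaces.

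For the homomorphism property I would compute both sides of $\Phi_\otimes\bigl([A][K_\alpha] * [B][K_\beta]\bigr) = \Phi_\otimes([A][K_\alpha]) * \Phi_\otimes([B][K_\beta])$ and check they agree term by term in the sum over $[M]$. On the left, expanding the multiplication in $\widetilde{\widetilde{\mathcal{H}}}_\Delta(\ca)$ and then applying $\Phi_\otimes$ yields
\[
\sum_{[M]} \widehat{F}_{AB}^M \cdot [M]\otimes \bigl[K_{\frac{\widehat{M}}{2}+\frac{\widehat{A}+\widehat{B}-\widehat{M}}{2}+\alpha+\beta}\bigr] = \sum_{[M]} \widehat{F}_{AB}^M \cdot [M]\otimes \bigl[K_{\frac{\widehat{A}+\widehat{B}}{2}+\alpha+\beta}\bigr].
\]
On the right, using the standard tensor product multiplication together with the $\Delta$-Hall multiplication from Theorem \ref{thm2.4}, I get
\[
([A]*[B])\otimes \bigl[K_{\frac{\widehat{A}}{2}+\alpha}\bigr]\bigl[K_{\frac{\widehat{B}}{2}+\beta}\bigr] = \sum_{[M]} \widehat{F}_{AB}^M \cdot [M]\otimes \bigl[K_{\frac{\widehat{A}+\widehat{B}}{2}+\alpha+\beta}\bigr],
\]
which is the same expression.

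The only thing that makes the argument work is the cancellation $\tfrac{\widehat{M}}{2}+\tfrac{\widehat{A}+\widehat{B}-\widehat{M}}{2}=\tfrac{\widehat{A}+\widehat{B}}{2}$, which is what motivates the shift in the definition of $\Phi_\otimes$. There is no analogue of the Lemma \ref{lem4.5} constraint needed here since working in $\tfrac{1}{2}K_0(\ca)$ makes all the relevant $K$-elements automatically available, so no extra verification is required. After these two computations, the proof is complete.
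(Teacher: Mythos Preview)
Your proposal is correct and follows essentially the same approach as the paper's own proof: both verify the homomorphism property by expanding each side and observing the cancellation $\tfrac{\widehat{M}}{2}+\tfrac{\widehat{A}+\widehat{B}-\widehat{M}}{2}=\tfrac{\widehat{A}+\widehat{B}}{2}$, and both establish bijectivity by matching bases (the paper exhibits the explicit preimage $[M][K_{\alpha-\widehat{M}/2}]$, while you phrase it as the shift $\alpha\mapsto\tfrac{\widehat{M}}{2}+\alpha$ being a bijection of $\tfrac{1}{2}K_0(\ca)$, which is the same observation).
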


\begin{proof}
For any $[A], [B]\in {\rm Iso}(\mathcal{A})$ and $\alpha, \beta \in \dfrac{1}{2}K_0(\mathcal{A})$, we have \begin{align*}
    &\quad\Phi_{\otimes}([A][K_\alpha]*[B][K_\beta])\\&=
    \Phi_{\otimes}\big(\sum_{[M]}\widehat{F}_{AB}^M\cdot [M][K_{\frac{\widehat{A}+\widehat{B}-\widehat{M}}{2}+\alpha+\beta}]\big)\\&=
    \sum_{[M]}\widehat{F}_{AB}^M\cdot [M]\otimes [K_{\frac{\widehat{M}}{2}+(\frac{\widehat{A}+\widehat{B}-\widehat{M}}{2}+\alpha+\beta)}]\\&=
    \sum_{[M]}\widehat{F}_{AB}^M\cdot [M]\otimes [K_{\frac{\widehat{A}+\widehat{B}}{2}+\alpha+\beta}],
\end{align*}
and \begin{align*}
    &\quad\Phi_{\otimes}([A][K_\alpha])*\Phi_{\otimes}([B][K_\beta])\\&=
    ([A]\otimes [K_{\frac{\widehat{A}}{2}+\alpha}])*([B]\otimes [K_{\frac{\widehat{B}}{2}+\beta}])\\&=
    \sum_{[M]}\widehat{F}_{AB}^M\cdot [M]\otimes [K_{\frac{\widehat{A}+\widehat{B}}{2}+\alpha+\beta}].
\end{align*}
It follows that $\Phi_{\otimes}$ is an algebra homomorphism.

Note that ${\mathcal{H}_{\Delta}}(\mathcal{A})\otimes_{\mathbb{Q}} \mathbb{Q}(\dfrac{1}{2}K_0(\ca))$ is a $\mathbb{Q}(\sqq)$-vector space with the basis $$\big\{[M]\otimes [K_\alpha]\mid[M]\in {\rm Iso}(\ca),\alpha\in \dfrac{1}{2}K_0(\ca)\big\}.$$ For each $[M]\otimes [K_\alpha]\in\mathcal{H}_{\Delta}(\mathcal{A})\otimes_{\mathbb{Q}} \mathbb{Q}(\dfrac{1}{2}K_0(\ca))$, there is an element $[M] [K_{\alpha-\frac{\widehat{M}}{2}}]\in\widetilde{\widetilde{\mathcal{H}}}_{\Delta}(\mathcal{A})$, such that $\Phi_{\otimes}([M] [K_{\alpha-\frac{\widehat{M}}{2}}])=[M]\otimes [K_\alpha]$. Hence $\Phi_{\otimes}$ is surjective. Clearly, $\Phi_{\otimes}$ is also injective. Therefore, $\Phi_{\otimes}$ is an algebra isomorphism.
\end{proof}

As an immediately consequence of Propositions \ref{prop3.2} and \ref{tensor space}, we obtain
\begin{corollary}
\label{cor4.10}
There is an algebra isomorphism
\begin{align*}
{\Psi}_{\otimes}: \widetilde{\widetilde{\mathcal{H}}}_{\Delta}(\mathcal{A})&\longrightarrow \mathcal{DH}_1(\ca)\otimes_{\mathbb{Q}} \mathbb{Q}(\dfrac{1}{2}K_0(\ca)).
\\
[M][K_\alpha]&\mapsto a_{M}\cdot u_{[M]}\otimes [K_{\frac{\widehat{M}}{2}+\alpha}]
\end{align*}
\end{corollary}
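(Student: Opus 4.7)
The plan is to exhibit $\Psi_{\otimes}$ as the composition of two isomorphisms that have already been established in the paper, so essentially no new computation is needed. Specifically, Proposition \ref{tensor space} furnishes an algebra isomorphism
\[
\Phi_{\otimes}: \widetilde{\widetilde{\mathcal{H}}}_{\Delta}(\mathcal{A}) \longrightarrow \mathcal{H}_{\Delta}(\mathcal{A}) \otimes_{\mathbb{Q}} \mathbb{Q}(\tfrac{1}{2}K_0(\ca)),\qquad [M][K_\alpha] \mapsto [M] \otimes [K_{\frac{\widehat{M}}{2}+\alpha}],
\]
and Proposition \ref{prop3.2} furnishes an algebra isomorphism $\Xi_\ca: \mathcal{DH}_1(\mathcal{A}) \to \mathcal{H}_{\Delta}(\mathcal{A})$ sending $u_{[M]} \mapsto \tfrac{1}{a_M}[M]$. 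The second map has inverse $\Xi_\ca^{-1}: [M] \mapsto a_M \cdot u_{[M]}$.

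First I would apply $\Xi_\ca^{-1}$ to the first tensor factor. Since the multiplication on $\mathcal{H}_{\Delta}(\mathcal{A}) \otimes_{\mathbb{Q}} \mathbb{Q}(\tfrac{1}{2}K_0(\ca))$ is defined by multiplying componentwise, the $\mathbb{Q}$-algebra isomorphism $\Xi_\ca^{-1}$ extends to an isomorphism
\[
\Xi_\ca^{-1} \otimes \mathrm{id}: \mathcal{H}_{\Delta}(\mathcal{A}) \otimes_{\mathbb{Q}} \mathbb{Q}(\tfrac{1}{2}K_0(\ca)) \longrightarrow \mathcal{DH}_1(\ca) \otimes_{\mathbb{Q}} \mathbb{Q}(\tfrac{1}{2}K_0(\ca)).
\]
Then I would define $\Psi_{\otimes} := (\Xi_\ca^{-1} \otimes \mathrm{id}) \circ \Phi_{\otimes}$. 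Tracing a basis element,
\[
[M][K_\alpha] \xmapsto{\Phi_{\otimes}} [M] \otimes [K_{\frac{\widehat{M}}{2}+\alpha}] \xmapsto{\Xi_\ca^{-1} \otimes \mathrm{id}} a_M \cdot u_{[M]} \otimes [K_{\frac{\widehat{M}}{2}+\alpha}],
\]
which matches the formula in the statement.

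Since $\Psi_{\otimes}$ is a composition of two algebra isomorphisms, it is itself an algebra isomorphism, and there is nothing left to check. There is really no main obstacle here: the only thing one might want to be careful about is that tensoring a $\mathbb{Q}$-algebra isomorphism with the identity on $\mathbb{Q}(\tfrac{1}{2}K_0(\ca))$ again yields an algebra isomorphism, but this is immediate from the componentwise definition of the multiplication on the tensor product.
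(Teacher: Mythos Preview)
Your proof is correct and follows exactly the same approach as the paper: define $\Psi_{\otimes} := (\Xi_\ca^{-1}\otimes\mathrm{id})\circ\Phi_{\otimes}$ using Propositions~\ref{prop3.2} and~\ref{tensor space}, then trace a basis element to verify the formula.
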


\begin{proof}
{Recall from Propositions \ref{prop3.2} and \ref{tensor space} that there are isomorphisms $\Xi_\ca$ and $\Phi_{\otimes}$ respectively. Consider the composition ${\Psi}_{\otimes}:=(\Xi_\ca^{-1}\otimes\mathrm{id})\circ\Phi_{\otimes}$ as follows: } $$\widetilde{\widetilde{\mathcal{H}}}_{\Delta}(\mathcal{A})\longrightarrow \mathcal{H}_{\Delta}(\mathcal{A})\otimes_{\mathbb{Q}} \mathbb{Q}(\dfrac{1}{2}K_0(\ca))
\longrightarrow\mathcal{DH}_1(\ca)\otimes_{\mathbb{Q}} \mathbb{Q}(\dfrac{1}{2}K_0(\ca)).$$ Then we are done.
\end{proof}

\begin{remark}
	{$(1)$ By definition, ${\widetilde{\mathcal{H}}}_{\Delta}(\mathcal{A})$ is a natural subalgebra of $\widetilde{\widetilde{\mathcal{H}}}_{\Delta}(\mathcal{A})$. Therefore, under the isomorphisms $\widetilde{\Xi}_\ca:
			\iH(\ca)
			\longrightarrow \widetilde{\mathcal{H}}_{\Delta}(\mathcal{A})$ and ${\Psi}_{\otimes}$, the $\imath$Hall algebra $\iH(\ca)$ is a subalgebra of $\mathcal{DH}_1(\ca)\otimes_{\mathbb{Q}} \mathbb{Q}(\dfrac{1}{2}K_0(\ca))$.
	
	$(2)$ The inverse of ${\Psi}_{\otimes}$ is given by:
	\begin{align*}
	{\Psi}_{\otimes}^{-1}: \;&\mathcal{DH}_1(\ca)\otimes_{\mathbb{Q}} \mathbb{Q}(\dfrac{1}{2}K_0(\ca))\longrightarrow \widetilde{\widetilde{\mathcal{H}}}_{\Delta}(\mathcal{A}).
	\\
	&  u_{[M]}\otimes [K_{\alpha}]\mapsto \frac{1}{a_M}\cdot [M][K_{-\frac{\widehat{M}}{2}+\alpha}]
	\end{align*}
Define the degree function on $\widetilde{\widetilde{\mathcal{H}}}_{\Delta}(\mathcal{A})$ by $$\deg([M][K_{\alpha}])=\widehat{M}+2\alpha;\quad \forall M\in\mathcal{A},\;\alpha\in K_0(\mathcal{A}).$$ Then the image of $\mathcal{DH}_1(\ca)\otimes 1$ under ${\Psi}_{\otimes}^{-1}$ has the expression $$\mathbb{Q}(\sqq)\{[M][K_{-\frac{\widehat{M}}{2}}]\mid [M]\in {\rm Iso}(\ca)\},$$ which is actually the subspace of $\widetilde{\widetilde{\mathcal{H}}}_{\Delta}(\mathcal{A})$ with degree $0$.

Therefore, the 1-periodic derived Hall algebra $\mathcal{DH}_1(\ca)$ can be embedded into
$\widetilde{\widetilde{\mathcal{H}}}_{\Delta}(\mathcal{A})$, which is an extended version of ${\widetilde{\mathcal{H}}}_{\Delta}(\mathcal{A})\cong \iH(\ca)$, hence embedded into an extended version of the twisted semi-derived Hall algebra.
In general, for $t$-periodic cases with $t\geq 1$, the embeddings from the derived Hall algebras to certain extended version of twisted semi-derived Hall algebras also hold, see Lin-Peng \cite[Theorem 5.6]{LP2}.
}
\end{remark}

\noindent{\bf Acknowledgment.}
This work was supported by the National Natural Science Foundation of China (No. 11871404, 11801473) and the
Fundamental Research Funds for Central Universities of China (No. 20720220043).


\end{document}